\let\csname equation*\endcsname\relax
\let\csname endequation*\endcsname\relax
\newtheorem{theorem}{Theorem}[section]
\newtheorem{lemma}[theorem]{Lemma}
\newtheorem{proposition}[theorem]{Proposition}
\newtheorem{algorithm}[theorem]{Algorithm} 
\newtheorem{corollary}[theorem]{Corollary} 
\theoremstyle{definition}
\theoremstyle{remark}
\newtheorem{remark}[theorem]{Remark}
\newcommand{\s}{\sigma(\bm A)}
\newcommand{\E}{\mathfrak{E}}
\newcommand{\diag}[1]{\mathrm{diag}\left(#1\right)}
\renewcommand{\L}{\mathcal{L}}
\newcommand{\dx}{\,\mathsf{d}\widehat x}
\newcommand{\x}{\widehat x}
\renewcommand{\O}{\Omega}
\newcommand{\sign}{\mathrm{sign}}
\begin{document}
\title{Computing the Entropy of a Large Matrix}

\date{Version~\today}

\author[T.~P.~Wihler]{Thomas P.~Wihler}
\address{Mathematics Institute, University of Bern, Sidlerstrasse 5, CH-3012 Bern, Switzerland}
\ead{wihler@math.unibe.ch}

\author[B.~Bessire]{B\"anz Bessire}
\author[A.~Stefanov]{Andr\'e Stefanov}
\address{Institute of Applied Physics, University of Bern, Sidlerstrasse 5, CH-3012 Bern, Switzerland}
\ead{andre.stefanov@iap.unibe.ch}
\ead{bbessire@iap.unibe.ch}

\begin{abstract}
Given a large real symmetric, positive semidefinite matrix, the goal of this paper is to show how a numerical approximation of the von Neumann entropy of the matrix can be computed in an efficient way, without relying on matrix diagonalization. An application from quantum optics dealing with the entanglement between photons illustrates the new algorithm.

\end{abstract}
\vspace{2pc}
%\noindent{\it Entropy, Large Matrices, Entanglement measure}: Article preparation, IOP journals
%\submitto{\JPA}
\maketitle

\section{Introduction}

Entropy is a quantity of large relevance in different areas of physics. Being at the foundation of thermodynamics and information theory \cite{Shannon1948}, the concept of entropy is even capable to describe the information content of black holes \cite{Bekenstein1973}. Despite its well-known properties \cite{Wehrl1978}, a numerical computation of the entropy can be rather difficult. This is especially the case when the classical definition of entropy is applied in the quantum domain where the state of a system is generically described by a density operator. The von Neumann entropy \cite{neumann1927} of a quantum state allows to quantify the amount of disorder in a system and has shown to be a valid quantifier of entanglement between two subsystems \cite{BarnettSM1989,BarnettSM1991,vedral1997}. For pure states, the entropy of entanglement is defined as the von Neumann entropy of the reduced density operator \cite{Greenberger2009}, whereas for mixed states several definitions exist, e.g.~the relative entropy of entanglement \cite{Vedral2002}. Here, we are interested in computing the entropy of entanglement of pure states. As a concrete example, we consider in Section~\ref{sc:qo} an entangled two-photon state generated by spontaneous parametric down-conversion. Under certain conditions to be specified, the reduced density operator of such a state assumes the shape of a large matrix with a number of rows and columns of order $\mathcal{O}(10^{8})$. In this case, the common approach to calculate the entropy of entanglement by means of the reduced density operator's eigenvalue spectrum fails. The here presented numerical algorithm computes the entropy of a large matrix without the prior knowledge of its eigenvalues.

\subsection*{Entropy formulae}
In quantum mechanics the density operator of a given state is a Hermitean, positive semidefinite operator. More specifically, focusing on real symmetric, positive semidefinite matrices~$\bm A\in\mathbb{R}^{m\times m}$, i.e., 
\[
\bm A=\bm A^\top,\qquad \bm v^\top\bm A\bm v\ge 0\quad\forall\bm v\in\mathbb{R}^m,
\]
the {\em entropy} of~$\bm A$ may be defined by
\begin{equation}\label{eq:spectrum}
\E(\bm A)=-\sum_{\lambda\in\s}\L(\lambda).
\end{equation}
Here, $\sigma(\bm A)\subset[0,\infty)$ signifies the spectrum of~$\bm A$, and~$\L$ is a continuous function on the real interval~$[0,\infty)$ which is given by
\begin{equation}\label{eq:L}
\L:\,[0,\infty)\to\mathbb{R},\qquad x\mapsto
\begin{cases}x\log(x)&\text{if }x>0\\0&\text{if }x=0\end{cases}.
\end{equation}

By the symmetry of~$\bm A$, the spectral theorem implies that~$\bm A$ can be diagonalized by means of an orthogonal matrix~$\bm Q\in\mathbb{R}^{m\times m}$, $\bm Q^{-1}=\bm Q^\top$, i.e. $\bm A=\bm Q\bm D\bm Q^\top$, where~$\bm D=\diag{\lambda_1,\lambda_2,\ldots,\lambda_m}$, and~$\lambda_1,\lambda_2,\ldots,\lambda_m\ge 0$ denote the eigenvalues of~$\bm A$. Then, we may define the matrix function
$
\L:\,\mathbb{R}^{m\times m}\to\mathbb{R}^{m\times m}
$
induced by~\eqref{eq:L} (and denoted with the same letter) in a standard way (see, e.g., \cite{HighamBook08}) by
$
\L(\bm A)=\bm Q\L(\bm D)\bm Q^\top$,
where
$
\L(\bm D)=\diag{\L(\lambda_1),\L(\lambda_2),\ldots,\L(\lambda_m)}$.
Furthermore, we see that
$
\E(\bm A)=-\tr(\L(\bm D))=-\tr(\bm Q^\top\L(\bm A)\bm Q)$.
Moreover, since the trace of a matrix is invariant with respect to similarity we arrive at
\begin{equation}\label{eq:entropy}
\E(\bm A)=-\tr(\L(\bm A)).
\end{equation}
At first glance, computing the entropy is possible by either applying~\eqref{eq:spectrum}, i.e., by computing the full spectrum of~$\bm A$, or by using formula~\eqref{eq:entropy} which involves the computation of the matrix logarithm. Evidently, for large matrices, both approaches are prone to be computationally unfeasible due to their high degree of complexity.

\subsection*{A new computational algorithm} The goal of this paper is to calculate the entropy of a matrix {\em without} the need of finding the eigenvalues of~$\bm A$ or the necessity of computing the matrix logarithm of~$\bm A$ explicitly. To this end, two key ingredients will be taken into account:
\begin{itemize}
\item The function~$\L$ will be approximated by a polynomial~$p$; in so doing the term~$\L(\bm A)\approx p(\bm A)$ can be expressed approximately as a sum of powers of~$\bm A$. This avoids the computation of the matrix logarithm.
\item Using the relations
$
\E(\bm A)=-\tr(\L(\bm A))\approx-\tr(p(\bm A))$,
the entropy of~$\bm A$ can be found approximately by computing the trace of $p(\bm A)$. This quantity, in turn, may be determined numerically by appropriately combining a Monte-Carlo procedure and a Clenshaw type scheme. In this way, the explicit computation of the matrix powers occurring in~$p(\bm A)$ can be circumvented.
\end{itemize}
Applying these ideas, we will obtain a low-complexity algorithm for the matrix entropy which is still able to generate accurate computational results. As a practical application we will consider a two-photon state entangled in frequency. More precisely, we will consider a large density matrix resulting from suitable discretization of the related continuous operator. We will use the new algorithm developed in this paper to demonstrate entanglement quantification by means of the entropy for a large real-valued density matrix~$\bm A\in\mathbb{R}^{m\times m}$, with a matrix size of the order~$m=\mathcal{O}(10^8)$.

The article is organized as follows: In Section~\ref{sc:MonteCarlo} we will recall a Monte-Carlo procedure proposed in~\cite{BaFaGo96} to compute the trace of a matrix function. Subsequently, a Chebyshev approximation polynomial of the function~$\L$ will be derived in Section~\ref{sc:p}, together with a sharp error estimate with respect to the supremum norm. Furthermore, Section~\ref{sc:entropy} contains the new algorithm and a probabilistic error analysis. Next, we present some numerical examples including a traditional finite element matrix and the above-mentioned quantum optics application in Section~\ref{sc:examples}. Finally, we draw some conclusions in Section~\ref{sc:conclusions}.

Throughout the paper, $\|\cdot\|_2$ denotes the Euclidean norm. Furthermore, $\tr(\cdot)$ signifies the trace of a matrix, i.e., the sum of its diagonal entries. We note the fact that~$\tr(\bm A)=\sum_{\lambda\in\s}\lambda$ for any~$\bm A\in\mathbb{R}^{m\times m}$. We also notice that, for any~$\gamma_0>0$, there holds that
\begin{equation}\label{eq:id}
\E(\bm A)=-\sum_{\lambda\in\s}\left[\gamma_0\L\left(\frac{\lambda}{\gamma_0}\right)+\log(\gamma_0)\lambda\right]=-\gamma_0\tr\left(\L\left(\gamma_0^{-1}\bm A\right)\right)-\log(\gamma_0)\tr(\bm A).
\end{equation}
The appealing property of this identity is that it allows to compute the entropy by means of the function~$\L$ as {\em restricted} to the interval~$[0,\nicefrac{\lambda}{\gamma_0}]$. In particular, since we approximate~$\L$ by a Chebyshev polynomial, the interval of approximation can be limited from~$[0,\max_{\lambda\in\s}\lambda]$ to the smaller interval~$[0,\gamma_0^{-1}\max_{\lambda\in\s}\lambda]$, if~$\gamma_0>1$.

\section{Monte-Carlo Approximation}\label{sc:MonteCarlo}

The following proposition, see, e.g.,~\cite{DongLiu94,Hutchinson89}, motivates a Monte-Carlo procedure for the computation of the trace of a symmetric matrix. Throughout the paper, we let~$\O=\{-1,+1\}$, and~$\O^m=\{-1,+1\}^m\subset\mathbb{R}^m$.

\begin{proposition}\label{pr:random}
Consider a symmetric matrix $\bm A\in\mathbb{R}^{m\times m}$ with~$\tr(\bm A)\neq 0$. Furthermore, let~$X$ be a random variable that takes values~$-1$ and~$1$ with probability~$\nicefrac12$ each. Moreover, let~$\bm\omega\in\O^m$ be a vector of~$m$ independent samples generated by~$X$. Then, 
$
\mathbb{E}(\bm\omega^\top\bm A\bm\omega)=\tr(\bm A)$,
where~$\mathbb{E}$ denotes the expected value.
\end{proposition}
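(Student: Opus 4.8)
The plan is to expand the scalar quantity $\bm\omega^\top\bm A\bm\omega$ into a double sum over the entries of $\bm A$ and then apply linearity of the expectation. Writing $\bm A=(A_{ij})_{i,j=1}^m$ and $\bm\omega=(\omega_1,\dots,\omega_m)^\top$, one has
\[
\bm\omega^\top\bm A\bm\omega=\sum_{i=1}^m\sum_{j=1}^m A_{ij}\,\omega_i\omega_j,
\]
so that $\mathbb{E}(\bm\omega^\top\bm A\bm\omega)=\sum_{i,j}A_{ij}\,\mathbb{E}(\omega_i\omega_j)$, the interchange of the finite sum and the expectation being immediate.

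The key step is to evaluate $\mathbb{E}(\omega_i\omega_j)$. For $i=j$ we use that each component takes only the values $\pm1$, hence $\omega_i^2=1$ deterministically and $\mathbb{E}(\omega_i^2)=1$. For $i\neq j$ the samples $\omega_i$ and $\omega_j$ are independent, so $\mathbb{E}(\omega_i\omega_j)=\mathbb{E}(\omega_i)\,\mathbb{E}(\omega_j)$; since $X$ attains $-1$ and $1$ with probability $\nicefrac12$ each, $\mathbb{E}(\omega_i)=\tfrac12(-1)+\tfrac12(1)=0$, and therefore $\mathbb{E}(\omega_i\omega_j)=0$. In short, $\mathbb{E}(\omega_i\omega_j)=\delta_{ij}$.

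Substituting this back yields $\mathbb{E}(\bm\omega^\top\bm A\bm\omega)=\sum_{i=1}^m A_{ii}=\tr(\bm A)$, which is the assertion. I do not anticipate any genuine obstacle: the argument is elementary and, in fact, uses neither the symmetry of $\bm A$ nor the hypothesis $\tr(\bm A)\neq0$ — the latter is relevant only later, when the estimator is actually employed. The single point deserving a little care is the off-diagonal computation, where the independence of the components of $\bm\omega$ is invoked (pairwise uncorrelatedness would in fact already suffice).
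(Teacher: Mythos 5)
Your proof is correct and is exactly the standard argument for the Hutchinson-type trace estimator: expand the quadratic form, use linearity of expectation, and observe that $\mathbb{E}(\omega_i\omega_j)=\delta_{ij}$. The paper itself gives no proof of this proposition (it only cites the literature), and your remarks that the symmetry of $\bm A$ and the hypothesis $\tr(\bm A)\neq 0$ are not needed for the identity, and that pairwise uncorrelatedness suffices, are accurate.
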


From a practical point of view, this result allows for the computation of a numerical approximation of the trace of a symmetric matrix~$\bm A$ by taking the mean of a finite number~$N$ of sample computations,
\[
\tr(\bm A)\approx\frac{1}{N}\sum_{i=1}^N\bm\omega_i^\top\bm A\bm\omega_i,
\]
where~$\bm\omega_i\in\O^m$ are random vectors like defined above. Thence, recalling~\eqref{eq:id}, we find, for any~$\gamma_0>0$, that
\begin{equation}\label{eq:entropyN}
\E(\bm A)\approx-\frac{\gamma_0}{N}\sum_{i=1}^N \bm\omega_i^\top\L\left(\gamma_0^{-1}\bm A\right)\bm\omega_i-\log(\gamma_0)\tr(\bm A).
\end{equation}

In order to provide bounds for terms of the form~$\bm\omega^\top\L\left(\gamma_0^{-1}\bm A\right)\bm\omega$, for~$\bm \omega\in\O^m$, we note the following lemma. It is based on an elementary analysis of the graph of~$\L$.

\begin{lemma}\label{lm:q}
Let $x_0>0$. Then, there hold the estimates
\[
\min(\L(x_0),e^{-1}\sign(e^{-1}-x_0))\le\L(x)\le\max(0,\L(x_0)),
\]
for any~$x\in[0,x_0]$. Here, $\sign$ is the sign function (with~$\sign(0)=0$).
\end{lemma}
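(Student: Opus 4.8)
The plan is to reduce the asserted two-sided bound to an elementary study of the graph of $\L$ on $[0,x_0]$, using only monotonicity. First I would record the shape of $\L$: on $(0,\infty)$ it is differentiable with $\L'(x)=\log x+1$, so $\L$ is strictly decreasing on $[0,e^{-1}]$ and strictly increasing on $[e^{-1},\infty)$; in addition $\L(0)=\L(1)=0$, the function attains its global minimum value $-e^{-1}$ at $x=e^{-1}$, and consequently $\L(x)\le 0$ for all $x\in[0,1]$. These facts are all that is needed.

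For the upper estimate, the restriction of $\L$ to $[0,x_0]$ is either monotone decreasing (when $x_0\le e^{-1}$) or decreasing-then-increasing (when $x_0>e^{-1}$); in both situations an interior point cannot yield a value larger than both endpoint values, so $\max_{x\in[0,x_0]}\L(x)=\max(\L(0),\L(x_0))=\max(0,\L(x_0))$, which is precisely the claimed right-hand side.

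For the lower estimate I would distinguish the cases $x_0\le e^{-1}$ and $x_0>e^{-1}$. If $x_0\le e^{-1}$, then $\L$ is decreasing on $[0,x_0]$, hence $\min_{x\in[0,x_0]}\L(x)=\L(x_0)$; moreover in this range $e^{-1}\sign(e^{-1}-x_0)\ge 0\ge\L(x_0)$, so $\min(\L(x_0),e^{-1}\sign(e^{-1}-x_0))=\L(x_0)$ agrees with this minimum. If $x_0>e^{-1}$, then $[0,x_0]$ contains the global minimizer $e^{-1}$, so $\min_{x\in[0,x_0]}\L(x)=-e^{-1}$; here $\sign(e^{-1}-x_0)=-1$ and $\L(x_0)\ge-e^{-1}$ always, whence $\min(\L(x_0),e^{-1}\sign(e^{-1}-x_0))=-e^{-1}$ again matches. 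In each case the right-hand side of the lemma equals $\min_{x\in[0,x_0]}\L(x)$, and the bound follows.

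The proof is essentially bookkeeping and I do not expect a genuine obstacle; the only points that require a little care are the collapse of the outer $\max$/$\min$ to the right-endpoint value $\L(x_0)$ whenever $x_0\le 1$ (which uses $\L(x_0)\le 0$), and the boundary case $x_0=e^{-1}$, where $\sign(0)=0$ turns the right-hand side into $\min(-e^{-1},0)=-e^{-1}$, still the correct minimum.
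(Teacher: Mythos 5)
Your argument is correct and is precisely the ``elementary analysis of the graph of $\L$'' that the paper invokes without writing out: the paper states the lemma with no detailed proof, and your case analysis based on $\L'(x)=\log x+1$, the minimum $-e^{-1}$ at $x=e^{-1}$, and the endpoint values $\L(0)=\L(1)=0$ supplies exactly the intended justification, including the boundary case $x_0=e^{-1}$ where $\sign(0)=0$.
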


We continue by deriving some upper and lower bounds for~$\bm v^\top\L(\bm A)\bm v$. 

\begin{corollary}\label{co:bounds}
Let~$\bm A\in\mathbb{R}^{m\times m}$ be real symmetric, and positive semidefinite, and~$\s\subset[0,x_0\gamma_0]$, for some~$x_0,\gamma_0>0$. Furthermore, consider~$\bm v\in\O^m$. Then, the estimates
\[
m\gamma_0\min(\L(x_0),e^{-1}\sign(e^{-1}-x_0))\le
\gamma_0\bm v^\top\L(\gamma_0^{-1}\bm A)\bm v
\le m\gamma_0\max(0,\L(x_0))
\]
hold true for any~$\bm v\in\O^m$.
\end{corollary}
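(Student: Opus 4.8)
The plan is to reduce the quadratic form to a weighted sum of pointwise values of $\L$ and then invoke Lemma~\ref{lm:q}. First I would diagonalize: by the spectral theorem write $\bm A=\bm Q\bm D\bm Q^\top$ with $\bm Q$ orthogonal ($\bm Q^{-1}=\bm Q^\top$) and $\bm D=\diag{\lambda_1,\dots,\lambda_m}$, where $\lambda_j\in\s\subset[0,x_0\gamma_0]$. Then $\gamma_0^{-1}\bm A=\bm Q(\gamma_0^{-1}\bm D)\bm Q^\top$ has eigenvalues $\gamma_0^{-1}\lambda_j\in[0,x_0]$, and by the definition of the induced matrix function $\L(\gamma_0^{-1}\bm A)=\bm Q\,\L(\gamma_0^{-1}\bm D)\,\bm Q^\top$, with $\L(\gamma_0^{-1}\bm D)=\diag{\L(\gamma_0^{-1}\lambda_1),\dots,\L(\gamma_0^{-1}\lambda_m)}$.

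Next, setting $\bm w=\bm Q^\top\bm v$, I would compute
\[
\bm v^\top\L(\gamma_0^{-1}\bm A)\bm v=\bm w^\top\L(\gamma_0^{-1}\bm D)\bm w=\sum_{j=1}^m\L(\gamma_0^{-1}\lambda_j)\,w_j^2 .
\]
Since each argument $\gamma_0^{-1}\lambda_j$ lies in $[0,x_0]$, Lemma~\ref{lm:q} (applied with that fixed $x_0$) gives $\min(\L(x_0),e^{-1}\sign(e^{-1}-x_0))\le\L(\gamma_0^{-1}\lambda_j)\le\max(0,\L(x_0))$ for every $j$. Because the weights $w_j^2$ are nonnegative, multiplying these bounds by $w_j^2$ and summing yields the two-sided estimate with the common factor $\sum_{j=1}^m w_j^2$. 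Finally I would use that $\bm Q$ is orthogonal and $\bm v\in\O^m$, so that $\sum_{j=1}^m w_j^2=\|\bm w\|_2^2=\|\bm Q^\top\bm v\|_2^2=\|\bm v\|_2^2=m$, the last equality since every entry of $\bm v$ equals $\pm1$. Multiplying the resulting inequality through by $\gamma_0>0$ gives exactly the claim.

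I do not expect a genuine obstacle here; the argument is a routine piece of spectral calculus. The only points that warrant a moment's care are that the orthogonal change of variables $\bm w=\bm Q^\top\bm v$ preserves both the Euclidean norm and the value $\|\bm v\|_2^2=m$ of a sign vector, and that the pointwise bounds of Lemma~\ref{lm:q} may legitimately be summed against the nonnegative weights $w_j^2$ regardless of the sign of the lower bound $\min(\L(x_0),e^{-1}\sign(e^{-1}-x_0))$.
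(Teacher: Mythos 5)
Your argument is correct and coincides with the paper's own proof: diagonalize $\bm A$, express the quadratic form as $\sum_j w_j^2\,\L(\gamma_0^{-1}\lambda_j)$ with $\bm w=\bm Q^\top\bm v$, apply Lemma~\ref{lm:q} pointwise against the nonnegative weights, and use $\|\bm Q^\top\bm v\|_2^2=\|\bm v\|_2^2=m$. Nothing is missing.
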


\begin{proof}
Let~$\bm v\in\O^m$. We choose an orthogonal matrix~$\bm Q$ that diagonalizes~$\bm A$, i.e. $\bm Q^\top\bm A\bm Q=\bm D=\diag{\lambda_1,\lambda_2,\ldots,\lambda_m}$. Then,
\[
\bm v^\top\L(\gamma_0^{-1}\bm A)\bm v
=(\bm Q^\top\bm v)^\top\L(\gamma^{-1}_0\bm D)\bm Q^\top\bm v
=\sum_{i=1}^m(\bm Q^\top\bm v)_i^2\L(\gamma_0^{-1}\lambda_i).
\]
Then, using the upper bound from Lemma~\ref{lm:q} and the identity~$\|\bm Q^\top\bm v\|_2=\|\bm v\|_2=\sqrt{m}$, it follows that
\begin{align*}
\bm v^\top\L(\gamma_0^{-1}\bm A)\bm v
&\le\max(0,\L(x_0))\sum_{i=1}^m(\bm Q^\top\bm v)_i^2=m\max(0,\L(x_0)).
\end{align*}
The proof of the lower bound is completely analogous.
\end{proof}

\begin{remark}\label{rm:gamma0}
Suppose that~$\lambda_{\max}>0$ is the maximal eigenvalue of~$\bm A$. Then, we choose $x_0,\gamma_0>0$ such that~$x_0\gamma_0=\lambda_{\max}$. Evidently, $\s\subset[0,x_0\gamma_0]$. Hence, by the previous Corollary~\ref{co:bounds}, we see that
\[
\lambda_{\max}mx_0^{-1}\min(\L(x_0),e^{-1}\sign(e^{-1}-x_0))\le
\gamma_0\bm v^\top\L(\gamma_0^{-1}\bm A)\bm v
\le \lambda_{\max}mx^{-1}_0\max(0,\L(x_0)).
\]
We may ask the question of how to choose~$x_0$ and~$\gamma_0$ such that the upper and lower bound in the above estimates are as close as possible to each other. In other words, we seek~$x_0^\mathrm{opt}>0$ such that the function
\begin{equation}\label{eq:d}
d(x_0)=x^{-1}_0\left[\max(0,\L(x_0))-\min(\L(x_0),e^{-1}\sign(e^{-1}-x_0))\right]
\end{equation}
is minimal. It turns out that this is the case for~$x^\mathrm{opt}_0=1$, i.e., $\gamma_0=\lambda_{\max}$; see Figure~\ref{fig:d}.
\end{remark}
%%%%%%%%%%%%%%%%%%%%%%%%%%%%%%%%%%%%Generate figure max_x0%%%%%%%%%%%%%%%%%%%%%%%%%%%%%%%%%%%%%

\begin{psfrags}%
\psfragscanon%
%
% text strings:
\psfrag{s03}[t][t]{\color[rgb]{0,0,0}\setlength{\tabcolsep}{0pt}\begin{tabular}{c}$x_0$\end{tabular}}%
\psfrag{s04}[b][b]{\color[rgb]{0,0,0}\setlength{\tabcolsep}{0pt}\begin{tabular}{c}$d(x_0$)\end{tabular}}%
%
% xticklabels:
\psfrag{x01}[t][t]{0}%
\psfrag{x02}[t][t]{1}%
\psfrag{x03}[t][t]{2}%
\psfrag{x04}[t][t]{3}%
\psfrag{x05}[t][t]{4}%
\psfrag{x06}[t][t]{5}%
%
% yticklabels:
\psfrag{v01}[r][r]{$10^{0}$}%
%
% Figure:
\begin{figure}
\centering
\includegraphics[width=0.6\linewidth]{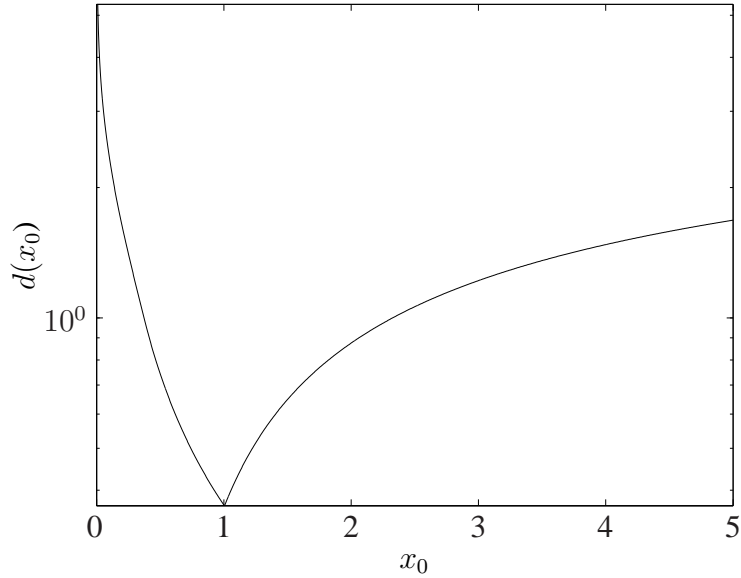}
\caption{Graph of the function~$d(x_0)$ from~\eqref{eq:d} in~$[0,5]$.}
\label{fig:d}
\end{figure}

\end{psfrags}%

\section{Chebyshev Approximation and Clenshaw's Algorithm}\label{sc:p}

Let us recall the Chebyshev polynomials of the first kind,
$
\widehat T_n(\widehat x)=\cos(n\arccos(\widehat x))$, $n\ge 0$,
on the reference interval~$\widehat I=[-1,1]$. They satisfy the three term recurrence relation
\[
\widehat T_{n+1}(\x)=2\x\widehat T_n(\x)-\widehat T_{n-1}(\widehat x),\qquad n\ge 1,
\]
with $\widehat T_0(\x)=1$, $\widehat T_1(\x)=\x$. These polynomials are orthogonal with respect to the inner product
\[
(f,g)=\int_{-1}^1\frac{f(\x)g(\x)}{\sqrt{1-\x^2}}\dx.
\]
More precisely,
\[
(\widehat T_m,\widehat T_n)=\begin{cases}
\pi&\text{if }m=n=0,\\
\displaystyle\frac{\delta_{mn}\pi}{2}&\text{if }m+n>0\\
\end{cases}
\]
where~$\delta_{mn}$ is Kronecker's delta; see, e.g., \cite{FoPa68}. Then, for~$x_0>0$, the affine transformation
\begin{equation}\label{eq:F}
F:\,[-1,1]\to[0,x_0],\qquad F(\x)=\frac{x_0}{2}(\x+1),\qquad F^{-1}(x)=\frac{2}{x_0}x-1,
\end{equation}
allows to define the Chebyshev polynomials~$\{T_n\}_{n\ge 0}$ on an interval~$I=[0,x_0]$:
\[
T_n=\widehat T_n\circ F^{-1},\qquad x\in I=[0,x_0].
\]

\begin{proposition}
Let~$x_0>0$, and~$n\in\mathbb{N}_0$. Then the function~$\L$ on the interval~$I=[0,x_0]$ is approximated by the polynomial function
\begin{equation}\label{eq:pn}
p_n(x)=\frac{a_0}{2}+\sum_{k=1}^na_kT_k(x),
\end{equation}
where the coefficients~$\{a_k\}_{k=0}^n$ are given by
\begin{align}
a_0&=x_0\left(\log\left(\frac{x_0}{4}\right)+1\right),\qquad
a_1=\frac{x_0}{4}\left(2\log\left(\frac{x_0}{4}\right)+3\right),\label{eq:a01}\\
a_k&=\frac{(-1)^kx_0}{k(k^2-1)},\qquad k\ge 2.\label{eq:ak}
\end{align}
Furthermore, for~$n\ge 1$, there holds the error estimate
\begin{equation}\label{eq:error}
\|\L-p_n\|_{\infty,(0,x_0)}\le\frac{x_0}{2n(n+1)},
\end{equation}
with~$\|\cdot\|_{\infty,(0,x_0)}$ denoting the supremum norm on~$I$.
\end{proposition}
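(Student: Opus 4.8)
The plan is to compute the Chebyshev coefficients of~$\L$ on~$I=[0,x_0]$ directly from the standard integral formula and then to bound the truncation error by the tail of the coefficient series. Writing the Chebyshev expansion $\L=\tfrac{a_0}{2}+\sum_{k\ge 1}a_kT_k$ with $T_k=\widehat T_k\circ F^{-1}$, the orthogonality relations for the~$\widehat T_k$ recalled above give
\[
a_k=\frac{2}{\pi}\int_{-1}^1\frac{\L(F(\x))\,\widehat T_k(\x)}{\sqrt{1-\x^2}}\dx=\frac{2}{\pi}\int_0^\pi\L(F(\cos\theta))\cos(k\theta)\,\mathsf{d}\theta,\qquad k\ge 0,
\]
after the substitution $\x=\cos\theta$. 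Since $F(\cos\theta)=\tfrac{x_0}{2}(1+\cos\theta)=x_0\cos^2(\theta/2)$, one has, for $\theta\in(0,\pi)$,
\[
\L(F(\cos\theta))=x_0\cos^2(\theta/2)\left(\log\tfrac{x_0}{4}+2\log\!\big(2\cos(\theta/2)\big)\right),
\]
so that $a_k$ splits as a multiple of $\int_0^\pi\cos^2(\theta/2)\cos(k\theta)\,\mathsf{d}\theta$ plus a multiple of $\int_0^\pi\cos^2(\theta/2)\log(2\cos(\theta/2))\cos(k\theta)\,\mathsf{d}\theta$.

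The first integral is elementary: using $\cos^2(\theta/2)=\tfrac12(1+\cos\theta)$ and the cosine orthogonality relations on $(0,\pi)$, it vanishes for $k\ge 2$ and contributes only to~$a_0$ and~$a_1$. For the second integral I would insert the classical Fourier expansion $\log(2\cos(\theta/2))=\sum_{j\ge 1}\tfrac{(-1)^{j+1}}{j}\cos(j\theta)$ (obtained, e.g., from $1+e^{i\theta}=2\cos(\theta/2)e^{i\theta/2}$ and the power series of $\log(1+z)$), which converges in~$L^2(0,\pi)$, and reduce everything again to integrals of products of two or three cosines. Collecting the finitely many surviving Kronecker-delta contributions for each fixed~$k$ and performing a short partial-fraction simplification yields the closed forms~\eqref{eq:a01} for $k=0,1$ and $a_k=(-1)^k x_0/(k(k^2-1))$ for $k\ge 2$. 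Since $\sum_{k\ge 2}|a_k|<\infty$ (the terms decay like $k^{-3}$), the Chebyshev series converges absolutely and uniformly; by completeness of $\{\widehat T_k\}$ in the weighted $L^2$ space and the continuity of~$\L$ on the closed interval $[0,x_0]$ (with $\L(0^+)=0$), its sum equals~$\L$, whence $\L-p_n=\sum_{k\ge n+1}a_kT_k$.

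For the error estimate I would use $\|T_k\|_{\infty,I}=1$ to obtain
\[
\|\L-p_n\|_{\infty,(0,x_0)}\le\sum_{k=n+1}^\infty|a_k|=x_0\sum_{k=n+1}^\infty\frac{1}{(k-1)k(k+1)},
\]
and then exploit the telescoping identity $\dfrac{1}{(k-1)k(k+1)}=\dfrac12\left(\dfrac{1}{(k-1)k}-\dfrac{1}{k(k+1)}\right)$, whose sum over $k\ge n+1$ equals $\dfrac{1}{2n(n+1)}$ for $n\ge 1$. This gives exactly~\eqref{eq:error}.

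The cosine bookkeeping and the telescoping sum are routine; the only genuinely delicate point is the closed-form evaluation of the coefficients, i.e.\ handling the logarithmic factor, for which the Fourier series of $\log(2\cos(\theta/2))$ is the essential tool. Some care is also needed in the low-index cases $k=0,1$, where the ``constant part'' of $\cos^2(\theta/2)$ and the first terms of that Fourier series interact, and in justifying the term-by-term integration (which is licensed by $L^2$ convergence against the bounded weight $\cos^2(\theta/2)\cos(k\theta)$). Everything else then follows mechanically.
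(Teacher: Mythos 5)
Your proposal is correct and follows the same overall skeleton as the paper's proof: expand $\L$ in a Chebyshev series via the affine map $F$, compute the coefficients with the substitution $\widehat x=\cos\theta$, truncate, and bound the tail using $\|T_k\|_{\infty}=1$ together with the telescoping identity $\tfrac{1}{k(k^2-1)}=\tfrac12\bigl(\tfrac{1}{k(k-1)}-\tfrac{1}{(k+1)k}\bigr)$, which sums to $\tfrac{1}{2n(n+1)}$. The only point of divergence is the evaluation of the coefficient integrals for $k\ge 2$: the paper passes to the complex variable $z=e^{it}$ and invokes the residue theorem, whereas you factor $\L(F(\cos\theta))=x_0\cos^2(\theta/2)\bigl(\log\tfrac{x_0}{4}+2\log(2\cos(\theta/2))\bigr)$ and insert the classical Fourier series $\log(2\cos(\theta/2))=\sum_{j\ge1}\tfrac{(-1)^{j+1}}{j}\cos(j\theta)$, reducing everything to cosine orthogonality. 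Both routes are legitimate and of comparable length; yours stays entirely real and makes transparent why only the neighbouring indices $j=k\pm1$ and $j=k$ survive (I checked that the bookkeeping indeed yields $a_k=(-1)^kx_0/(k(k^2-1))$), while the paper's residue computation is more mechanical once set up. Your remarks on justifying term-by-term integration and on uniform convergence of the series to $\L$ are, if anything, more careful than the paper's appeal to ``standard Fourier theory.''
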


\begin{proof}
We begin by defining the function
\[
\widehat\L=\L\circ F:\,\widehat x\mapsto\frac{x_0}{2}(\widehat x+1)\log\left(\frac{x_0}{2}(\widehat x+1)\right)
\]
on~$\widehat I=[-1,1]$. Using standard Fourier theory and affine scaling, the function~$\L$ can be represented by the infinite series
\[
\L(x)=\frac{a_0}{2}+\sum_{k=1}^\infty a_kT_k(x)
=\frac{a_0}{2}+\sum_{k=1}^\infty a_k(\widehat T_k\circ F^{-1})(x)=(\widehat{\L}\circ F^{-1})(x),
\]
with
\[
a_k=\frac{2}{\pi}\int_{-1}^1\frac{\widehat\L(\widehat x)\widehat T_k(\widehat x)}{\sqrt{1-\x^2}}\dx.
\]
Then, we define the polynomial~$p_n$ by truncation:
\[
p_n(x)=\frac{a_0}{2}+\sum_{k=1}^n a_kT_k(x).
\]
The coefficients~$\{a_k\}_{k=0}^n$ can be computed by employing the substitution~$\widehat x=\cos(t)$, cf.~\cite{Tr08}. This implies that
\[
a_k=\frac{2}{\pi}\int_0^{\pi}\widehat{\L}(\cos t)\cos(kt)\,\mathsf{d}t,\qquad k\ge 0.
\]
For~$k=0,1$ we find the formulas~\eqref{eq:a01} by direct calculation. In addition, noting the identity~$\cos t=\frac{1}{2}(e^{it}+e^{-it})$ and switching to complex variables~$z=e^{it}$, $\mathsf{d}z=iz\,\mathsf{d}t$, the formula~\eqref{eq:ak} follows from the residual theorem.

As for the error estimate, we notice that~$\|T_k\|_{\infty,(0,x_0)}=\|\widehat T_k\|_{\infty,(-1,1)}=1$. Then, for~$n\ge 1$,
\[
\|\L-p_n\|_{\infty,(0,x_0)}
=\left\|\sum_{k=n+1}^\infty a_kT_k\right\|_{\infty,(0,x_0)}
\le\sum_{k=n+1}^\infty|a_k|
=x_0\sum_{k=n+1}^\infty\frac{1}{k(k^2-1)}.
\]
Noticing the telescope sum
\[
\sum_{k=n+1}^\infty\frac{1}{k(k^2-1)}
=\frac12\sum_{k=n+1}^\infty\left(\frac{1}{k(k-1)}-\frac{1}{(k+1)k}\right)=\frac{1}{2n(n+1)}
\]
completes the proof.
\end{proof}

\begin{remark}
The polynomials~$p_n$, for~$n=0,1,2,3$, and~$x_0=3$ are shown, together with the moduli of the approximation errors, in Figure~\ref{fig:log}.
\end{remark}

\begin{figure}
\centering
\subfigure{
\providecommand\matlabtextA{\color[rgb]{0.000,0.000,0.000}\fontsize{5}{10}\selectfont\strut}%
\psfrag{017}[cl][cl]{\matlabtextA $x\log(x)$}%
\psfrag{018}[cl][cl]{\matlabtextA $n=3$}%
\psfrag{019}[cl][cl]{\matlabtextA $n=2$}%
\psfrag{020}[cl][cl]{\matlabtextA $n=1$}%
\psfrag{021}[cl][cl]{\matlabtextA $n=0$}%
\psfrag{022}[tc][tc]{\tiny{$x$}}%
%
%% </text>
%
%% <xtick>
%
\def\matlabfragNegXTick{\mathord{\makebox[0pt][r]{$-$}}}
\psfrag{000}[ct][ct]{\tiny{$0$}}%
\psfrag{001}[ct][ct]{\tiny{$0.5$}}%
\psfrag{002}[ct][ct]{\tiny{$1$}}%
\psfrag{003}[ct][ct]{\tiny{$1.5$}}%
\psfrag{004}[ct][ct]{\tiny{$2$}}%
\psfrag{005}[ct][ct]{\tiny{$2.5$}}%
\psfrag{006}[ct][ct]{\tiny{$3$}}%
%
%% </xtick>
%
%% <ytick>
%
\psfrag{007}[rc][rc]{\tiny{$-1$}}%
\psfrag{008}[rc][rc]{\tiny{$-0.5$}}%
\psfrag{009}[rc][rc]{\tiny{$0$}}%
\psfrag{010}[rc][rc]{\tiny{$0.5$}}%
\psfrag{011}[rc][rc]{\tiny{$1$}}%
\psfrag{012}[rc][rc]{\tiny{$1.5$}}%
\psfrag{013}[rc][rc]{\tiny{$2$}}%
\psfrag{014}[rc][rc]{\tiny{$2.5$}}%
\psfrag{015}[rc][rc]{\tiny{$3$}}%
\psfrag{016}[rc][rc]{\tiny{$3.5$}}%
%
%% </ytick>

\includegraphics[width=0.48\linewidth]{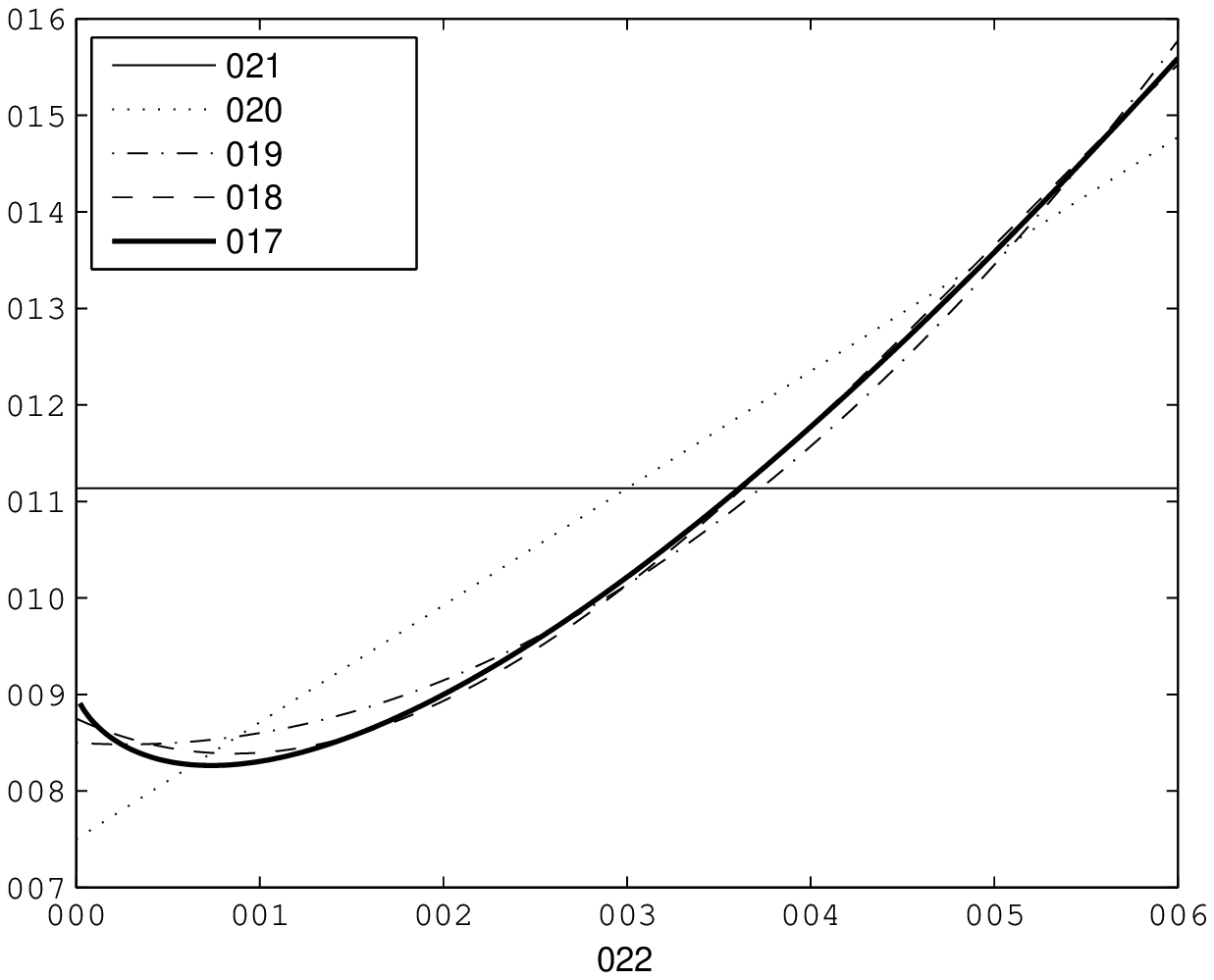}}
\subfigure{

\providecommand\matlabtextA{\color[rgb]{0.000,0.000,0.000}\fontsize{5}{10}\selectfont\strut}%
\psfrag{013}[cl][cl]{\matlabtextA $n=3$}%
\psfrag{014}[cl][cl]{\matlabtextA $n=2$}%
\psfrag{015}[cl][cl]{\matlabtextA $n=1$}%
\psfrag{016}[cl][cl]{\matlabtextA $n=0$}%
\psfrag{017}[tc][tc]{\tiny{$x$}}%
%
%% </text>
%
%% <xtick>
%
\def\matlabfragNegXTick{\mathord{\makebox[0pt][r]{$-$}}}
\psfrag{000}[ct][ct]{\tiny{$0$}}%
\psfrag{001}[ct][ct]{\tiny{$0.5$}}%
\psfrag{002}[ct][ct]{\tiny{$1$}}%
\psfrag{003}[ct][ct]{\tiny{$1.5$}}%
\psfrag{004}[ct][ct]{\tiny{$2$}}%
\psfrag{005}[ct][ct]{\tiny{$2.5$}}%
\psfrag{006}[ct][ct]{\tiny{$3$}}%
%
%% </xtick>
%
%% <ytick>
%
\psfrag{007}[rc][rc]{\tiny{$10^{-4}$}}%
\psfrag{008}[rc][rc]{\tiny{$10^{-3}$}}%
\psfrag{009}[rc][rc]{\tiny{$10^{-2}$}}%
\psfrag{010}[rc][rc]{\tiny{$10^{-1}$}}%
\psfrag{011}[rc][rc]{\tiny{$10^{0}$}}%
\psfrag{012}[rc][rc]{\tiny{$10^{1}$}}%
\includegraphics[width=0.48\linewidth]{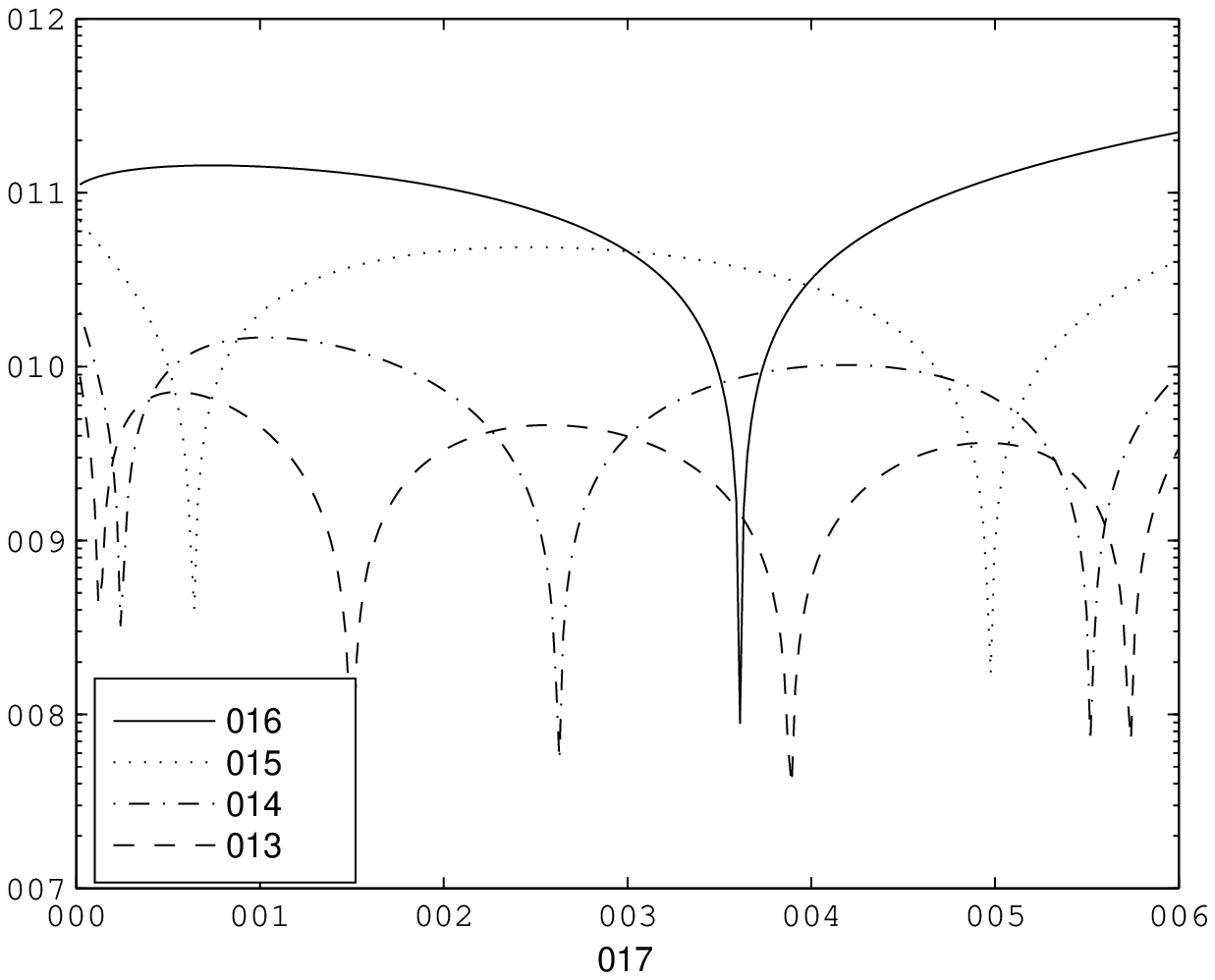}}

\caption{Polynomials~$p_n$, for~$n=0,1,2,3$, and~$x_0=3$. Left: Graphs of~$p_n$. Right: Approximation errors~$|\L(x)-p_n(x)|$.}
\label{fig:log}
\end{figure}

The above result implies the following estimates:

\begin{corollary}\label{co:Lpn}
Let~$\bm A\in\mathbb{R}^{m\times m}$ be symmetric and positive semidefinite, with~$\s\subset[0,x_0\gamma_0]$, for some~$x_0,\gamma_0>0$. Furthermore, consider~$\bm v\in\O^m$. Then, for~$n\ge 1$, it holds the bound
\[
|\bm v^\top(\L(\gamma_0^{-1}\bm A)-p_n(\gamma_0^{-1}\bm A))\bm v|\le\frac{mx_0}{2n(n+1)},
\]
where~$p_n$ is the Chebyshev approximation polynomial of the function~$\L$ from~\eqref{eq:pn}.
\end{corollary}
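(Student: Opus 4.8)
The plan is to reduce the matrix inequality to the scalar error bound \eqref{eq:error} by diagonalisation, exactly along the lines of the proof of Corollary~\ref{co:bounds}. First I would invoke the spectral theorem to write $\bm A=\bm Q\bm D\bm Q^\top$ with $\bm Q$ orthogonal and $\bm D=\diag{\lambda_1,\lambda_2,\ldots,\lambda_m}$; then $\gamma_0^{-1}\bm A=\bm Q(\gamma_0^{-1}\bm D)\bm Q^\top$ has eigenvalues $\gamma_0^{-1}\lambda_i$, all contained in $[0,x_0]$ by the hypothesis $\s\subset[0,x_0\gamma_0]$. Since $p_n$ is a polynomial and $\L$ is applied as a matrix function in the standard spectral sense, both commute with the similarity transform, so
\[
\L(\gamma_0^{-1}\bm A)-p_n(\gamma_0^{-1}\bm A)=\bm Q\big(\L(\gamma_0^{-1}\bm D)-p_n(\gamma_0^{-1}\bm D)\big)\bm Q^\top,
\]
which is diagonalised by the same $\bm Q$, with diagonal entries $\L(\gamma_0^{-1}\lambda_i)-p_n(\gamma_0^{-1}\lambda_i)$.

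Next, writing $\bm w=\bm Q^\top\bm v$, I would expand
\[
\bm v^\top\big(\L(\gamma_0^{-1}\bm A)-p_n(\gamma_0^{-1}\bm A)\big)\bm v=\sum_{i=1}^m w_i^2\big(\L(\gamma_0^{-1}\lambda_i)-p_n(\gamma_0^{-1}\lambda_i)\big),
\]
just as in the proof of Corollary~\ref{co:bounds}. Taking absolute values and bounding each factor $|\L(\gamma_0^{-1}\lambda_i)-p_n(\gamma_0^{-1}\lambda_i)|$ by $\|\L-p_n\|_{\infty,(0,x_0)}$, the estimate \eqref{eq:error} yields the uniform per-term bound $\tfrac{x_0}{2n(n+1)}$. (The points $\gamma_0^{-1}\lambda_i$ may coincide with the endpoints $0$ or $x_0$, but since $\L$ and $p_n$ are continuous on the closed interval, the supremum over the open interval $(0,x_0)$ still controls these values.)

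Finally, using orthogonality of $\bm Q$ and $\bm v\in\O^m$, one has $\sum_{i=1}^m w_i^2=\|\bm Q^\top\bm v\|_2^2=\|\bm v\|_2^2=m$, so that
\[
\Big|\bm v^\top\big(\L(\gamma_0^{-1}\bm A)-p_n(\gamma_0^{-1}\bm A)\big)\bm v\Big|\le\frac{x_0}{2n(n+1)}\sum_{i=1}^m w_i^2=\frac{mx_0}{2n(n+1)},
\]
which is the claimed bound. I do not anticipate any genuine obstacle: the argument is a verbatim adaptation of the proof of Corollary~\ref{co:bounds}, the only point requiring a word of care being the mismatch between the open interval $(0,x_0)$ appearing in \eqref{eq:error} and the fact that the spectrum may touch the endpoints, which is dispatched immediately by continuity.
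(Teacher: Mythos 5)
Your proposal is correct and follows essentially the same route as the paper's proof: diagonalize $\bm A$ by an orthogonal $\bm Q$, expand the quadratic form as $\sum_i(\bm Q^\top\bm v)_i^2\bigl(\L(\gamma_0^{-1}\lambda_i)-p_n(\gamma_0^{-1}\lambda_i)\bigr)$, bound each term by the supremum-norm estimate \eqref{eq:error}, and use $\|\bm Q^\top\bm v\|_2^2=m$. Your extra remark about the endpoints of the interval is a fine (if minor) point of care that the paper glosses over.
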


\begin{proof}
Let~$\bm Q\in\mathbb{R}^{m\times m}$ be orthogonal such that~$\bm Q^\top\bm A\bm Q=\diag{\lambda_1,\lambda_2,\ldots,\lambda_m}$. Then,
\begin{align*}
\bm v^\top(\L(\gamma_0^{-1}\bm A)-p_n(\gamma_0^{-1}\bm A))\bm v
&=(\bm Q^\top\bm v)^\top(\L(\gamma_0^{-1}\bm D)-p_n(\gamma_0^{-1}\bm D))\bm Q^\top\bm v\\
&=\sum_{i=1}^m (\bm Q^\top\bm v)_i^2(\L(\gamma_0^{-1}\lambda_i)-p_n(\gamma_0^{-1}\lambda_i)).
\end{align*}
Hence, since~$\|\bm Q^\top\bm v\|_2=\|\bm v\|_2=\sqrt{m}$, we arrive at
\[
\left|\bm v^\top(\L(\gamma_0^{-1}\bm A)-p_n(\gamma_0^{-1}\bm A))\bm v\right|
\le m\max_{1\le i\le m} |\L(\gamma_0^{-1}\lambda_i)-p_n(\gamma_0^{-1}\lambda_i)|.
\]
Finally, using the error estimate~\eqref{eq:error}, yields the desired result.
\end{proof}

In practical applications, Chebyshev series can be evaluated using the Clenshaw algorithm, see, e.g., \cite{Cl55,FoPa68}: Starting from functions
$
b_{n+2}(x)=b_{n+1}(x)=0$,
and applying affine scaling from~$[-1,1]$ to~$[0,x_0]$, see~\eqref{eq:F}, we define
\[
b_k(x) = a_k+2\left(\frac{2}{x_0}x-1\right)b_{k+1}(x)-b_{k+2}(x),\qquad k=n,n-1,\ldots,0,
\]
where~$\{a_k\}$ are the coefficients from~\eqref{eq:a01}--\eqref{eq:ak}. Then, it can be shown that~$p_n$ from~\eqref{eq:pn} can be represented in the form
\[
p_n(x)=\frac12(a_0+b_0(x)-b_2(x)).
\]

For a symmetric matrix~$\bm A\in\mathbb{R}^{m\times m}$ the polynomial~$\gamma_0p_n(\gamma_0^{-1}\bm A)$ can be computed in a similar way (indeed, this is possible since all terms involved consist of commuting sums of powers of~$\bm A$): Setting~$\bm B_{n+2}=\bm B_{n+1}=\bm 0\in\mathbb{R}^{m\times m}$ we define a finite sequence~$\{\bm B_k\}_{k=0}^{n+2}\subset\mathbb{R}^{m\times m}$ of matrices by the reverse recurrence relation
\[
\bm B_k=a_k\bm I+2\left(\frac{2}{x_0\gamma_0}\bm A-\bm I\right)\bm B_{k+1}-\bm B_{k+2},\qquad k=n,n-1,\ldots,0,
\]
where~$\bm I$ signifies the identity matrix in~$\mathbb{R}^{m\times m}$. Then,
\[
p_n(\gamma_0^{-1}\bm A)=\frac12\left(a_0\bm I+\bm B_0-\bm B_2\right).
\]
From the above relation we find for a vector~$\bm v\in\O^m$ that
\[
\bm B_k\bm v=a_k\bm v+\frac{4}{x_0\gamma_0}\bm A\bm B_{k+1}\bm v-2\bm B_{k+1}\bm v-\bm B_{k+2}\bm v,\qquad k=n,n-1,\ldots,0.
\]
Therefore, introducing the variable~$\bm y_k=\bm B_k\bm v$, we see that
\begin{align*}
\bm y_k&=a_k\bm v+\frac{4}{x_0\gamma_0}\bm A\bm y_{k+1}-2\bm y_{k+1}-\bm y_{k+2},
%\rho_k&=ma_k+\frac{4}{x_0}\bm z^\top\bm y_{k+1}-2\rho_{k+1}-\rho_{k+2},
\end{align*}
for~$k=n,n-1,\ldots,0$, starting from~$\bm y_{n+2}=\bm y_{n+1}=\bm 0$. Finally, we obtain
\[
\bm v^\top p_n(\gamma_0^{-1}\bm A)\bm v=\frac12\left(ma_0+\bm v^\top(\bm y_0-\bm y_2)\right).
\]
Consequently, evaluating the above product essentially amounts to~$n$ matrix-vector multiplications.

\begin{algorithm}\label{alg:1}
Let~$\bm A\in\mathbb{R}^{m\times m}$ be a symmetric matrix, and~$\bm v\in\O^m$ a vector. Then, for any~$\gamma_0>0$, the quantity~$\gamma_0\bm v^\top p_n(\gamma_0^{-1}\bm A)\bm v$, where~$p_n$, $n\ge 1$, is the polynomial from~\eqref{eq:pn}, can be computed by means of the following procedure:
\begin{enumerate}
\item Set~$\bm y_{n+2}=\bm y_{n+1}=\bm 0\in\mathbb{R}^m$.
\item For $k=n,n-1,\ldots,0$ do
\[
\bm y_k=a_k\bm v+\frac{4}{x_0\gamma_0}\bm A\bm y_{k+1}-2\bm y_{k+1}-\bm y_{k+2}.
\]
\item Output~$\frac{\gamma_0}{2}\left(ma_0+\bm v^\top(\bm y_0-\bm y_2)\right)$.
\end{enumerate}
Here, $\{a_k\}_{k=0}^n$ are the coefficients from~\eqref{eq:a01}--\eqref{eq:ak}.
\end{algorithm}

\section{Computing the Entropy}\label{sc:entropy}

We now return to the idea of computing a numerical approximation of the entropy of a matrix by means of~\eqref{eq:entropyN}. In order to avoid the computation of the matrix logarithm, however, we will use the approximation~$\widetilde{\E}(\bm A)\approx\E(\bm A)$, where, for some~$\gamma_0>0$ to be specified later,
\begin{equation}\label{eq:entropypn}
\widetilde{\E}(\bm A)=-\frac{\gamma_0}{N}\sum_{i=1}^N \bm\omega_i^\top p_n\left(\gamma_0^{-1}\bm A\right)\bm\omega_i-\log(\gamma_0)\tr(\bm A).
\end{equation}
Here~$p_n$ is the approximation polynomial of degree~$n$ for~$\L$ from~\eqref{eq:pn}, and $\bm\omega_i\in\O^m$ are random vectors. Note that the expansion of a function in Chebyshev polynomials together with a stochastic evaluation of the trace according to Proposition~\ref{pr:random} also play an important role in the context of the kernel polynomial method (KPM); see~\cite{weisse2006}. 

In order to numerically evaluate (12), we propose the following basic algorithm:

\begin{algorithm}\label{alg:alg}
Let~$\bm A\in\mathbb{R}^{m\times m}$ be a real symmetric, positive semidefinite matrix, and~$n,N\in\mathbb{N}$. Furthermore, choose~$\gamma_0>0$. Then: 
\begin{enumerate}
\item Compute~$N$ random vectors~$\bm\omega_i\in\O^m$, $i=1,2,\ldots,N$, with entries~$\pm 1$ occurring with the same probability~$\nicefrac12$.
\item Determine the scalars~$\xi_i=\gamma_0\bm\omega_i^\top p_n(\gamma_0^{-1}\bm A)\bm\omega_i$ using Algorithm~\ref{alg:1}.
\item Output~$-\frac{1}{N}\sum_{i=1}^N\xi_i-\log(\gamma_0)\tr(\bm A)$.
\end{enumerate}
\end{algorithm}

The approximation provided by the above algorithm has two essential error sources: Firstly, the use of the Monte-Carlo approach~\eqref{eq:entropyN} brings about a certain randomness, and, secondly, replacing the function~$\L$ by~$p_n$ in~\eqref{eq:entropypn} leads to an approximation error. The latter point has been addressed already in Corollary~\ref{co:Lpn}. In order to deal with the issue of randomness, we provide a confidence interval analysis for the numerical approximation~\eqref{eq:entropypn} following the approach presented in~\cite{BaFaGo96}. To this end, we recall a special case of Hoeffding's inequality~\cite{Hoeffding63}:

\begin{proposition}
Let~$X_1, X_2,\ldots, X_N$ be independent random variables with zero means and bounded ranges~$\alpha_i^-\le X_i\le \alpha_i^+$, $i=1,2,\ldots, N$. Then, for any~$\eta>0$, there holds the probability bound
\[
P(|X_1+X_2+\ldots+X_N|\ge\eta)\le 2\exp\left(\frac{-2\eta^2}{\sum_{i=1}^N(\alpha_i^+-\alpha^-_i)^2}\right).
\]
\end{proposition}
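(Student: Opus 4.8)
The plan is to follow the classical Chernoff-bounding argument underlying Hoeffding's inequality. Write $S_N=X_1+X_2+\ldots+X_N$. First I would bound the one-sided tail $P(S_N\ge\eta)$: for any $t>0$, Markov's inequality applied to the nonnegative random variable $e^{tS_N}$ gives $P(S_N\ge\eta)=P(e^{tS_N}\ge e^{t\eta})\le e^{-t\eta}\,\mathbb{E}(e^{tS_N})$, and by independence $\mathbb{E}(e^{tS_N})=\prod_{i=1}^N\mathbb{E}(e^{tX_i})$. The crux is then to control each factor $\mathbb{E}(e^{tX_i})$ using only the zero-mean assumption and the range bound $\alpha_i^-\le X_i\le\alpha_i^+$.

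The key lemma (Hoeffding's lemma) states that if a random variable $X$ has zero mean and satisfies $a\le X\le b$ almost surely, then $\mathbb{E}(e^{tX})\le\exp\!\left(\tfrac{t^2(b-a)^2}{8}\right)$ for all $t\in\mathbb{R}$. I would prove this by convexity: since $x\mapsto e^{tx}$ is convex, for $x\in[a,b]$ one has $e^{tx}\le\frac{b-x}{b-a}e^{ta}+\frac{x-a}{b-a}e^{tb}$; taking expectations and using $\mathbb{E}(X)=0$ yields $\mathbb{E}(e^{tX})\le\frac{b}{b-a}e^{ta}-\frac{a}{b-a}e^{tb}=:e^{\varphi(t)}$. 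A direct computation shows that the function $\varphi$ satisfies $\varphi(0)=\varphi'(0)=0$ and $\varphi''(t)\le\tfrac14(b-a)^2$ for all $t$ (the second derivative has the form $s(1-s)(b-a)^2$ for some $s\in[0,1]$, hence is at most $(b-a)^2/4$), so Taylor's theorem with remainder gives $\varphi(t)\le\tfrac18 t^2(b-a)^2$.

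Plugging the lemma into the product bound gives $P(S_N\ge\eta)\le\exp\!\left(-t\eta+\tfrac{t^2}{8}\sum_{i=1}^N(\alpha_i^+-\alpha_i^-)^2\right)$ for every $t>0$. Minimizing the exponent over $t$—the optimal choice is $t=4\eta/\sum_{i=1}^N(\alpha_i^+-\alpha_i^-)^2$—yields $P(S_N\ge\eta)\le\exp\!\left(-2\eta^2/\sum_{i=1}^N(\alpha_i^+-\alpha_i^-)^2\right)$. Finally, applying the same argument to $-X_1,\ldots,-X_N$ (which again have zero means and ranges of the same width $\alpha_i^+-\alpha_i^-$) bounds $P(-S_N\ge\eta)$ identically, and a union bound on $\{|S_N|\ge\eta\}=\{S_N\ge\eta\}\cup\{-S_N\ge\eta\}$ produces the factor $2$, giving the claimed estimate.

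\textbf{Main obstacle.} The routine parts are the Chernoff step and the optimization over $t$; the real work is Hoeffding's lemma, specifically the uniform bound $\varphi''(t)\le(b-a)^2/4$. Establishing this cleanly requires rewriting $\varphi$ in a convenient parametrized form and recognizing the resulting expression as a variance of a Bernoulli-type quantity (bounded by $1/4$); getting this bookkeeping right is the delicate point of the proof.
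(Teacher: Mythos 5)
Your proposal is correct: the Chernoff bound via Markov's inequality on $e^{tS_N}$, Hoeffding's lemma proved by convexity and the bound $\varphi''(t)\le\tfrac14(b-a)^2$ on the log-moment-generating function, optimization at $t=4\eta/\sum_{i=1}^N(\alpha_i^+-\alpha_i^-)^2$, and the union bound for the two-sided statement are exactly the standard route, and each step as you describe it goes through. Note that the paper itself offers no proof of this proposition---it is quoted as a known special case of Hoeffding's inequality with a citation to Hoeffding (1963)---so there is nothing to compare against; your argument is the classical one from that reference and is sound.
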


In order to apply the previous result, we define, for~$i=1,2,\ldots, N$, the random variables
\begin{align*}
X_i&=-\gamma_0\bm\omega_i^\top\L(\gamma_0^{-1}\bm A)\bm\omega_i-\log(\gamma_0)\tr(\bm A)-\E(\bm A),
\end{align*}
where~$\bm\omega_i\in\O^m$ are random vectors with entries~$\pm 1$ appearing with equal probability of~$\nicefrac{1}{2}$. Using~\eqref{eq:id}, we conclude that
\begin{align*}
X_i&=\gamma_0\left[-\bm\omega_i^\top\L(\gamma_0^{-1}\bm A)\bm\omega_i+\tr(\L(\gamma_0^{-1}\bm A))\right].
\end{align*}
According to Proposition~\ref{pr:random}, we have that
$
\mathbb{E}(X_i)=0$, $i=1,2,\ldots,N$,
provided that $\tr(\L(\bm A))\neq 0$. Furthermore, we have that
\[
X_i=\gamma_0\left[-\bm\omega_i^\top p_n(\gamma_0^{-1}\bm A)\bm\omega_i+\tr(\L(\gamma_0^{-1}\bm A))\right]+\gamma_0\bm\omega_i^\top\left[-\L(\gamma_0^{-1}\bm A)+p_n(\gamma_0^{-1}\bm A)\right]\bm\omega_i,
\]
and thus, with the aid of Corollary~\ref{co:Lpn},
$
\alpha_i^-\le X_i\le \alpha_i^+$,
with
\[
\alpha_i^\pm=\gamma_0\left[-\bm\omega_i^\top p_n(\gamma_0^{-1}\bm A)\bm\omega_i+\tr(\L(\gamma_0^{-1}\bm A))\right]\pm\frac{mx_0\gamma_0}{2n(n+1)},
\]
for $i=1,2,\ldots, N$. Then, setting
\begin{equation*}
\alpha_{\min}=\min_{1\le i\le N}\alpha_i^-,\qquad 
\alpha_{\max}=\max_{1\le i\le N}\alpha_i^+,
\end{equation*}
and
\begin{equation}\label{eq:delta}
\delta=\alpha_{\max}-\alpha_{\min}=\max_{1\le i\le N}{-\gamma_0\bm\omega_i^\top p_n(\gamma_0^{-1}\bm A)\bm\omega_i}-\min_{1\le i\le N}{-\gamma_0\bm\omega_i^\top p_n(\gamma_0^{-1}\bm A)\bm\omega_i}+\frac{mx_0\gamma_0}{n(n+1)},
\end{equation}
we obtain the uniform bounds $\alpha_{\min}\le X_i \le\alpha_{\max}$, $1\le i\le N$,
and hence, by Hoeffding's inequality, Proposition~\ref{pr:random}, we find, for any~$\eta>0$, the probability estimate
\begin{equation}\label{eq:aux1}
\begin{split}
P\left(\left|\frac{1}{N}\sum_{i=1}^NX_i\right|\ge\frac{\eta}{N}\right)&=
P\left(\left|-\frac{\gamma_0}{N}\left[\sum_{i=1}^N\bm\omega_i^\top\L(\gamma_0^{-1}\bm A)\bm\omega_i\right]-\log(\gamma_0)\tr(\bm A)-\E(\bm A)\right|\ge\frac{\eta}{N}\right)\\
&\le %2\exp\left(\frac{-2\eta^2}{\sum_{i=1}^N(\alpha_i-\beta_i)^2}\right)
2\exp\left(-2N\delta^{-2}\left(\nicefrac{\eta}{N}\right)^2\right).
\end{split}
\end{equation}
Using the approximation~\eqref{eq:entropypn}, and recalling again Corollary~\ref{co:Lpn} results in
\begin{align*}
\left|\E(\bm A)-\widetilde{\E}(\bm A)\right|
&=\left|-\frac{\gamma_0}{N}\left[\sum_{i=1}^N \bm\omega_i^\top p_n\left(\gamma_0^{-1}\bm A\right)\bm\omega_i\right]-\log(\gamma_0)\tr(\bm A)-\E(\bm A)\right|\\
&\le\left|-\frac{\gamma_0}{N}\sum_{i=1}^N \bm\omega_i^\top \left(p_n\left(\gamma_0^{-1}\bm A\right)-\L\left(\gamma_0^{-1}\bm A\right)\right)\bm\omega_i\right|\\
&\quad+\left|-\frac{\gamma_0}{N}\left[\sum_{i=1}^N\bm\omega_i^\top\L(\gamma_0^{-1}\bm A)\bm\omega_i\right]-\log(\gamma_0)\tr(\bm A)-\E(\bm A)\right|\\
&\le \frac{mx_0\gamma_0}{2n(n+1)}+\left|-\frac{\gamma_0}{N}\left[\sum_{i=1}^N\bm\omega_i^\top\L(\gamma_0^{-1}\bm A)\bm\omega_i\right]-\log(\gamma_0)\tr(\bm A)-\E(\bm A)\right|.
\end{align*}
Thus, with~\eqref{eq:aux1}, we obtain
\begin{align*}
P&\left(\left|\E(\bm A)-\widetilde{\E}(\bm A)\right|\ge\frac{\eta}{N}+\frac{mx_0\gamma_0}{2n(n+1)}\right)\\
&\le P\left(\left|-\frac{\gamma_0}{N}\left[\sum_{i=1}^N\bm\omega_i^\top\L(\gamma_0^{-1}\bm A)\bm\omega_i\right]-\log(\gamma_0)\tr(\bm A)-\E(\bm A)\right|\ge\frac{\eta}{N}\right)\\
&\le 2\exp\left(-2N\delta^{-2}\left(\nicefrac{\eta}{N}\right)^2\right),
\end{align*}
and therefore,
\begin{equation}\label{eq:aux2}
P\left(\left|\E(\bm A)-\widetilde{\E}(\bm A)\right|<\frac{\eta}{N}+\frac{mx_0\gamma_0}{2n(n+1)}\right)
> %2\exp\left(\frac{-2\eta^2}{\sum_{i=1}^N(\alpha_i-\beta_i)^2}\right)
1-2\exp\left(-2N\delta^{-2}\left(\nicefrac{\eta}{N}\right)^2\right).
\end{equation}
Now, fixing an error tolerance~$\tau>0$, we select~$\nicefrac{\eta}{N}>0$ such that
\[
\frac{\eta}{N}=\tau-\frac{mx_0\gamma_0}{2n(n+1)}.
\]
Hence,
\[
P\left(\left|\E(\bm A)-\widetilde{\E}(\bm A)\right|<\tau\right)
> %2\exp\left(\frac{-2\eta^2}{\sum_{i=1}^N(\alpha_i-\beta_i)^2}\right)
1-2\exp\left(-2N\delta^{-2}\left(\tau-\frac{mx_0\gamma_0}{2n(n+1)}\right)^2\right).
\]

We thus have proved the following result:

\begin{theorem}\label{th:main1}
Consider a real symmetric, positive semidefinite matrix~$\bm A\in\mathbb{R}^{m\times m}$, and constants $\gamma_0,x_0>0$ such that~$\s\subset[0,\gamma_0x_0]$. Moreover, let~$\tau>0$ be a prescribed error tolerance, and~$n\in\mathbb{N}$ a polynomial degree such that
\begin{equation}\label{eq:n}
\tau>\frac{mx_0\gamma_0}{2n(n+1)}.
\end{equation}
Then, computing~$N\in\mathbb{N}$ sample vectors $\bm\omega_i\in\O^m$, $i=1,2,\ldots,N$,
with entries~$\pm 1$ of equal probability~$\nicefrac12$, the output of Algorithm~\ref{alg:alg}, denoted by~$\widetilde\E(\bm A)$, satisfies
\begin{equation}\label{eq:errorbound}
\left|\E(\bm A)-\widetilde\E(\bm A)\right|<\tau,
\end{equation}
with probability at least
\begin{equation}\label{eq:p}
p=1-2\exp\left(-2N\delta^{-2}\left(\tau-\frac{mx_0\gamma_0}{2n(n+1)}\right)^2\right).
\end{equation}
Here, $\delta=\alpha_{\max}-\alpha_{\min}$ is defined in~\eqref{eq:delta}.
\end{theorem}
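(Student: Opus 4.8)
The plan is to follow the confidence-interval approach of~\cite{BaFaGo96}; the proof is then essentially the accumulation of the estimates already assembled above. First I would introduce, for $i=1,\ldots,N$, the centred random variables
\[
X_i=-\gamma_0\bm\omega_i^\top\L(\gamma_0^{-1}\bm A)\bm\omega_i-\log(\gamma_0)\tr(\bm A)-\E(\bm A),
\]
and check that $\mathbb{E}(X_i)=0$: rewriting $X_i=\gamma_0[-\bm\omega_i^\top\L(\gamma_0^{-1}\bm A)\bm\omega_i+\tr(\L(\gamma_0^{-1}\bm A))]$ by means of the scaling identity~\eqref{eq:id}, this is Proposition~\ref{pr:random} applied to the symmetric matrix $\L(\gamma_0^{-1}\bm A)$ (which presupposes $\tr(\L(\bm A))\neq0$, i.e.\ $\E(\bm A)\neq0$). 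Note that $\tfrac1N\sum_i X_i$ is exactly the Monte-Carlo entropy error of~\eqref{eq:entropyN}, before $\L$ is replaced by $p_n$.

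Next I would derive ranges for the $X_i$ from the splitting
\[
X_i=\gamma_0\bigl[-\bm\omega_i^\top p_n(\gamma_0^{-1}\bm A)\bm\omega_i+\tr(\L(\gamma_0^{-1}\bm A))\bigr]+\gamma_0\bm\omega_i^\top\bigl(p_n(\gamma_0^{-1}\bm A)-\L(\gamma_0^{-1}\bm A)\bigr)\bm\omega_i,
\]
in which the first bracket is the quantity actually produced by Algorithm~\ref{alg:1} (up to the trace term) while the second summand has modulus at most $\tfrac{mx_0\gamma_0}{2n(n+1)}$ by Corollary~\ref{co:Lpn}. This yields $\alpha_i^-\le X_i\le\alpha_i^+$ with explicit $\alpha_i^\pm$, and, after passing to $\alpha_{\min}=\min_i\alpha_i^-$ and $\alpha_{\max}=\max_i\alpha_i^+$, the uniform enclosure $\alpha_{\min}\le X_i\le\alpha_{\max}$ of common width $\delta$ as in~\eqref{eq:delta}.

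Then, the $X_i$ being independent (the samples $\bm\omega_i$ are drawn independently), zero-mean and of range $\le\delta$, Hoeffding's inequality in the form recalled above gives, for every $\eta>0$, the bound $P(|\tfrac1N\sum_i X_i|\ge\tfrac{\eta}{N})\le 2\exp(-2N\delta^{-2}(\eta/N)^2)$. Combined with the triangle inequality
\[
\bigl|\E(\bm A)-\widetilde\E(\bm A)\bigr|\le\Bigl|\tfrac{\gamma_0}{N}\textstyle\sum_i\bm\omega_i^\top(p_n-\L)(\gamma_0^{-1}\bm A)\bm\omega_i\Bigr|+\Bigl|\tfrac1N\textstyle\sum_i X_i\Bigr|,
\]
whose first term is $\le\tfrac{mx_0\gamma_0}{2n(n+1)}$ by Corollary~\ref{co:Lpn}, this shows $|\E(\bm A)-\widetilde\E(\bm A)|<\tfrac{\eta}{N}+\tfrac{mx_0\gamma_0}{2n(n+1)}$ off the exceptional event. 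Finally I would choose $\eta/N=\tau-\tfrac{mx_0\gamma_0}{2n(n+1)}$, which is positive precisely by hypothesis~\eqref{eq:n}, and substitute to obtain~\eqref{eq:errorbound} with probability at least~\eqref{eq:p}.

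The one genuinely delicate point is the range step that feeds Hoeffding's inequality: as written, the width $\delta$ in~\eqref{eq:delta} is an a posteriori quantity depending on the samples $\bm\omega_i$ actually drawn, whereas the version of Hoeffding's inequality quoted uses deterministic ranges. A clean remedy is to bound $\delta$ from above, a priori, by combining Corollary~\ref{co:bounds} (a deterministic enclosure of $\gamma_0\bm\omega_i^\top\L(\gamma_0^{-1}\bm A)\bm\omega_i$ that is uniform over $\O^m$) with Corollary~\ref{co:Lpn}, and to run the argument with that deterministic width; alternatively one reads the estimate, as in~\cite{BaFaGo96}, as pertaining to the realized run. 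Everything else — the centring, the independence, and the two appeals to Corollary~\ref{co:Lpn} — is routine bookkeeping.
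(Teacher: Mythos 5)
Your proof follows the paper's own derivation essentially verbatim: the same centred variables $X_i$, the same splitting via $p_n$ and Corollary~\ref{co:Lpn} to obtain the ranges $\alpha_i^\pm$, the same application of Hoeffding's inequality, and the same choice $\nicefrac{\eta}{N}=\tau-\frac{mx_0\gamma_0}{2n(n+1)}$. Your closing observation --- that $\delta$ in~\eqref{eq:delta} is sample-dependent while the quoted Hoeffding inequality requires deterministic ranges --- is a genuine subtlety that the paper itself glosses over (it implicitly reads the bound as pertaining to the realized run, as in~\cite{BaFaGo96}), and your proposed a priori remedy via Corollary~\ref{co:bounds} is the right fix.
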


\begin{remark}
The above theorem shows that, in order to achieve a certain prescribed accuracy~$\tau$ in the computations, the polynomial degree~$n$ of~$p_n$ from~\eqref{eq:pn} needs to be sufficiently large in accordance with~\eqref{eq:n}. In addition, we see that the probability~$p$ of satisfying the error estimate~\eqref{eq:errorbound} can be increased by adding more samples in the Monte-Carlo approach. In addition, from~\eqref{eq:p}, it follows that
\[
N=\frac12\delta^2\left(\tau-\frac{mx_0\gamma_0}{2n(n+1)}\right)^{-2}\log\left(\frac{2}{1-p}\right).
\]
Therefore, noticing that~$\delta=\mathcal{O}(m)$ (cf.~Corollary~\ref{co:bounds}) may imply that the theorem could require~$N$ to be unfeasibly large. Consequently, again following~\cite[Section 4.2]{BaFaGo96}, it may often be more practical to fix the number~$N$ of samples, or in this paper, a polynomial degree~$n$ beforehand, and to provide an error bound for a given probability~$p$. Indeed, with~$p$ from~\eqref{eq:p} we solve for~$\tau$ to arrive at
\begin{equation}\label{eq:tau}
\tau=\frac{mx_0\gamma_0}{2n(n+1)}+\delta\sqrt{\frac{1}{2N}\log\left(\frac{2}{1-p}\right)},
\end{equation}
where we have obeyed~\eqref{eq:n} in choosing the sign in front of the square root. We notice that~$\tau$ is a sum of two independent error contributions. Thus, for given polynomial degree~$n$ it is reasonable to choose the number of samples~$N$ such that
\[
\frac{mx_0\gamma_0}{2n(n+1)}=\delta\sqrt{\frac{1}{2N}\log\left(\frac{2}{1-p}\right)},
\]
i.e.,
\begin{equation}\label{eq:N}
N=\frac{2n^2(n+1)^2\delta^2\log\left(\frac{2}{1-p}\right)}{m^2x_0^2\gamma_0^2}.
\end{equation}
This observation leads to Algorithm~\ref{alg:algfinal} below.
\end{remark}

\begin{algorithm}\label{alg:algfinal}
Let~$\bm A\in\mathbb{R}^{m\times m}$ be a real symmetric, positive semidefinite matrix, and~$n\in\mathbb{N}$ a prescribed polynomial degree. Furthermore, choose~$x_0,\gamma_0>0$ with~$\s\subset[0,x_0\gamma_0]$, and a probability~$p\in(0,1)$.
Then: 
\begin{enumerate}
\item Set $i=0$, $N=1$, $\xi_{\max}=-\infty$, $\xi_{\min}=\infty$.
\item While $i<N$ do
\begin{itemize}
\item $i=i+1$.
\item Find a random vector~$\bm\omega_i\in\O^m$ with entries~$\pm 1$ occurring with the same probability~$\nicefrac12$.
\item Determine the scalar~$\xi_i=\gamma_0\bm\omega_i^\top p_n(\gamma_0^{-1}\bm A)\bm\omega_i$ using Algorithm~\ref{alg:1}.
\item Compute
$\xi_{\min}=\min(\xi_{\min},\xi_i)$ and
$\xi_{\max}=\max(\xi_{\max},\xi_i)$.
\item Find
\[
\delta=\xi_{\max}-\xi_{\min}+\frac{mx_0\gamma_0}{n(n+1)},
\] 
and $N$ from~\eqref{eq:N}.
\end{itemize}
End do.
\item Output the approximate entropy
\[
\widetilde{\E}(\bm A)=-\frac{1}{N}\sum_{i=1}^N\xi_i-\log(\gamma_0)\tr(\bm A),
\]
and the error tolerance from~\eqref{eq:tau}.
\end{enumerate}
\end{algorithm}

\begin{remark} In accordance with Remark~\ref{rm:gamma0} it is sensible to choose~$x_0=1$ and~$\gamma_0=\widetilde\lambda_{\max}$, where~$\widetilde\lambda_{\max}>0$ is an upper bound on the spectrum~$\s$. In practice, $\widetilde\lambda_{\max}$ needs to be determined by suitable algorithms; for example, for sparse matrices with comparatively small off-diagonal elements, the Gerschgorin circle theorem~\cite{Gerschgorin31} could be applied; more generally, there are various numerical methods for finding the maximal eigenvalue of a large (and possibly sparse) matrix including, in particular, iterative schemes such as the Arnoldi algorithm (we refer to~\cite{saad2011} for details on different methods).
\end{remark}

\section{Examples}\label{sc:examples}

We shall now illustrate the method developed in this paper by means of two examples.

\subsection{A Finite Element Matrix}

We consider the classical stiffness matrix
\[
\bm A=\begin{pmatrix}
2& -1& 0 &\cdots& 0\\
-1& 2& -1&\ddots&\vdots\\
 0 & \ddots  & \ddots &\ddots &0\\
\vdots&\ddots &-1& 2& -1\\
0&\cdots&0 &-1& 2\\
\end{pmatrix}\in\mathbb{R}^{m\times m},
\]
which appears in the discretization of the one-dimensional boundary value problem
\[
-u''(x)=f(x),\quad x\in(0,1),\qquad u(0)=u(1)=0,
\]
by uniform linear finite element; see, e.g., \cite[Chap.~1]{Jo09}. It can be shown that the eigenvalues of~$\bm A$ are given by
\[
\lambda_i=4\sin^2\left(\frac{i\pi}{2m+2}\right),\qquad 1\le i\le m,
\]
and therefore,
\begin{align*}
\E(\bm A)&=-4\sum_{i=1}^m\sin^2\left(\frac{i\pi}{2m+2}\right)\log\left(4\sin^2\left(\frac{i\pi}{2m+2}\right)\right)\\
&\approx -\frac{4(2m+2)}{\pi}\int_0^{\nicefrac{\pi}{2}}\sin^2(x)\log\left(4\sin^2(x)\right)\,\mathsf{d}x.
\end{align*}
Now, using the fact that~$\int_0^{\nicefrac{\pi}{2}}\sin^2(x)\log\left(4\sin^2(x)\right)\,\mathsf{d}x=\nicefrac{\pi}{4}$, we see that~$\E(\bm A)\approx -2m$. In particular, the entropy decreases asymptotically linearly as~$m\to\infty$.

In Table~\ref{tb:fem} we present numerical results for a prescribed probability~$p=0.95$ and several polynomial degrees~$n$. The latter quantity has been chosen `by hand' with moderate growth as the matrix size~$m$ is increasing. We clearly see that the algorithm generates quite accurate results already for a low number of samples. Indeed, the relative errors are (except for~$m=10$) below 1\%, and the computed errors based on~\eqref{eq:tau} are very reasonable as compared to the magnitude of the exact entropy. 

%\begin{table}
%\begin{center}
%\begin{tabular}{r|r|r|r|r|r|r}
%\hline
%matr.~size~$m$ & polyn.~deg.~$n$ & \# of samples~$N$ & ex.~entropy & abs.~err. & rel.~err. & comput.~err.\\\hline\hline
%   10 &  2 &  34  &     19.232    &   0.5457    &   2.8374\%    &    6.658\\
%           50     &       4 &          77 &      99.228 &     0.2224 &     0.2241\% &      9.9689\\
%          100     &       6  &        171  &     199.23  &   0.0967  &   0.0485\%  &     9.5226\\
%          500      &      8   &        96   &    999.23   &   0.2600   &  0.0260\%   &    27.743\\
%         1000       &    10    &      132    &   1999.2    &   1.7362    & 0.0868\%   &   36.342\\\hline
%\end{tabular}
%\end{center}
%\caption{$p=0.95$}
%\end{table}

\begin{table}
\begin{center}
\begin{tabular}{c|c|c|c|c|c|c}
\hline
matrix & polyn. & number of  & exact& abs.~err. & rel.~err. & comput.\\
size~$m$& deg.~$n$ & samples~$N$& entropy& &&err.\\\hline\hline
    10  &          2     &      21   &    $-0.19232\cdot 10^2$    &  0.2127   &    1.1057\%    &   $0.66293\cdot 10^1$\\
           50   &         3     &      33  &     $-0.99228\cdot10^2$  &    0.7396  &     0.7453\%  &     $0.16587\cdot 10^2$\\
          100   &         3     &      37  &     $-0.19923\cdot10^3$  &   0.0749  &   0.0376\%  &     $0.33149\cdot 10^2$\\
          500  &          4     &      18  &     $-0.99923\cdot10^3$  &     2.2705  &    0.2272\%  &     $0.98905\cdot 10^2$\\
         1000  &          6     &      35  &     $-0.19992\cdot10^4$  &     1.1163  &   0.0558\%  &     $0.94559\cdot 10^2$\\
         5000  &          8      &     15  &     $-0.99992\cdot10^4$  &     7.4981  &   0.0750\%  &     $0.27554\cdot 10^3$\\
 \hline
\end{tabular}
\end{center}
\caption{Entropy of a finite element matrix for various sizes~$m$ and $p=0.95$.}
\label{tb:fem}
\end{table}

\subsection{An Application in Quantum Optics}\label{sc:qo}
Entangled photons have become a widely used non-classical light source to investigate fundamental aspects of entanglement \cite{RevModPhys.71.S288,Genovese2005}. Their unique properties have further paved the way to potentially practical applications in quantum communication and quantum computing \cite{RevModPhys.79.135,Gisin2007}. In recent years spontaneous parametric down-conversion (SPDC) has become the standard procedure to generate entangled photon states. SPDC occurs when a noncentrosymmetric crystal is pumped by a laser beam strong enough to induce nonlinear interactions. In this case, a pump photon with angular frequency $\omega_p$ may be annihilated and two new photons of lower frequencies $\omega_i$ and $\omega_s$, denoted as the {\em idler} and the {\em signal}, are created. Energy conservation demands $\omega_i+\omega_s=\omega_p$. If the experimental configuration of the three involved photons is further restricted to the case where they propagate collinearly, the resulting two-photon state, given by,
\begin{equation}\label{dc_state}
\vert \Psi \rangle=\vert 0\rangle + \int \mathsf{d}\omega_i \int \mathsf{d}\omega_s \; f(\omega_i,\omega_s)\; \hat{a}_i^\dagger(\omega_i)\hat{a}_s^\dagger(\omega_s) \; \vert 0\rangle,
\end{equation}
describes entanglement in the frequency domain \cite{keller_rubin97pra}. We consider here identically polarized photon states created by the action of $\hat{a}_j^\dagger(\omega_j)$, $j\in\{i,s\}$, on the combined vacuum state $\vert 0\rangle \doteq\vert 0\rangle_i\otimes \vert 0\rangle_s$. The state in (\ref{dc_state}) is an entangled state if the joint spectral amplitude
\begin{equation}\label{dc_f}
f(\omega_i,\omega_s)\propto\exp\left(-\frac{(\omega_i+\omega_s-\omega_{cp})^2\tau_p^2}{8\log(2)}\right)\mbox{sinc}\left(\frac{\Delta k(\omega_i,\omega_s)L}{2}\right)
\end{equation}
cannot be separated into a product $f(\omega_i,\omega_s)=g(\omega_i)h(\omega_s)$. The pump pulse with center frequency $\omega_{cp}$ is represented by the exponential term in (\ref{dc_f}) and its duration is given by $\tau_p$. The parameter $L$ denotes the length of the crystal. The efficiency of the SPDC process is dominated by $\Delta k(\omega_i,\omega_s)=k_i(\omega_i)+k_s(\omega_s)-k_p(\omega_i+\omega_s)+\frac{2\pi}{G}$, where $k_j(\omega)$ is the frequency-dependent propagation constant of a periodically poled crystal with poling period $G$. Using the corresponding Schmidt decomposition, the amount of entanglement in (\ref{dc_state}) can now be quantified by the entropy (\ref{eq:id}) of either idler or signal subsystem \cite{bennet_etalpra96}. The state of each subsystem is described by its corresponding continuous density matrix, given by,
\begin{align*}
A_i(\omega,\omega')&=\int \mathsf{d}\omega_s\; f(\omega,\omega_s)f^*(\omega',\omega_s), &
A_s(\omega,\omega')&=\int \mathsf{d}\omega_i\; f(\omega_i,\omega)f^*(\omega_i,\omega').
\end{align*}
Due to the symmetry of $f(\omega_i,\omega_s)$ in (\ref{dc_f}) we define $A_i(\omega,\omega')=A_s(\omega,\omega')\doteq A(\omega,\omega')\in\mathbb{R}$. In order to calculate (\ref{eq:id}), the continuous function $A(\omega,\omega')$ has to be discretized on a lattice, i.e.~$A(\omega,\omega')\rightarrow \bm A\in\mathbb{R}^{m\times m}$, with $\bm A=\bm A^\top$. Since $\bm A$ is the density operator of a physical state, its eigenvalues are further distributed such that $\lambda \ge 0$ for all $\lambda\in\s$. For short pump pulses, where $\tau_p$ is of the order of fs, the exact $\E(\bm A)$ can be calculated by means of $\s$ since only small grid sizes ($m\approx800$) are needed to sufficiently resolve $A(\omega,\omega')$ \cite{law_etal00prl}. Unfortunately, the grid sizes required to discretize $A(\omega,\omega')$ for long pump pulses, e.g., for~$\tau_p$ on a timescale of ns, are very large since in this case $f(\omega_i,\omega_s)$ is dominated by a narrow Gaussian function. Diagonalization of $\bm A$ is then practically unfeasible. However, a numerical approximation of the entropy according to Algorithm 4.5 is still possible. We have calculated the entropy $\widetilde{\E}(\bm A/\tr(\bm A))$ for a pump pulse duration of $\tau_p=88.3$ ns and a $L=11.5$ mm long potassium titanyl phosphate crystal with $G=9.014$ $\mu$m. In the frequency domain, this specific choice of $\tau_p$ corresponds to a pump pulse with a narrow spectral bandwidth of 5 MHz. Notice, that the normalization~$\bm A/\tr(\bm A)$ results from the fact that a physical state needs to be normalized; indeed, since~$\tr(\bm A)=\sum_{\lambda\in\sigma(\bm A)}\lambda$ we have that~$\tr(\bm A/\tr(\bm A))=1$. In order to save memory space we made use of the fact that only a small amount of entries in $\bm A$ are significantly nonzero which allows to store the matrix in sparse format. This procedure results in a band matrix with $m=0.72\cdot 10^8$ and 37 diagonals. Figure \ref{fig:entropy} shows convergence of $\widetilde{\E}(\bm A/\tr(\bm A))$ and the error tolerance from~\eqref{eq:tau}. For polynomial degree $n=20$ and error probability $p=0.95$ we obtain $\widetilde{\E}(\bm A/\tr(\bm A))=14.969\pm0.128$. Up to a polynomial degree of $n=20$, the discretization error for $m=0.72\cdot 10^8$ is still smaller than the computational error~$\tau$. For all $n$ the number $N$ of sample vectors is $N=8$ and with Gerschgorin's theorem one obtains $\gamma_0=\tilde{\lambda}_{max}=7.57\cdot 10^{-7}$. Our computations were performed in {\sc Matlab}\footnote{{\sc Matlab} is a trademark of The MathWorks, inc.} on a 12 core Intel Xeon X5650 (2.66 GHz) processor with 96 GB RAM. It is remarkable that for a matrix size $m=0.72\cdot 10^8$ the computational time for the entropy only took about 25 minutes. This clearly underlines the high efficiency of Algorithm 4.5 for this example. In the case of a \textit{maximally} entangled, discrete bipartite system of finite dimension~$m^2$, the entropy increases according to~$\widetilde{\E}=\mathcal{O}(\log(m))$. Due to~$\tau_p$ being of the order of ns, the state under consideration exhibits a very high degree of entanglement and is therefore almost equivalent to a maximally entangled system with~$m\approx\exp(14.969)$.

%%%%%%%%%%Generate figure Entropy of large matrix%%%%%%%%%%%%%%%%%%%%%%%%%%%%%%%% 
\begin{psfrags}%
\psfragscanon%
%
% text strings:
\psfrag{012}[tc][tc]{\color[rgb]{0,0,0}\setlength{\tabcolsep}{0pt}\begin{tabular}{c}$n$\end{tabular}}%
\psfrag{011}[bc][bc]{\color[rgb]{0,0,0}\setlength{\tabcolsep}{0pt}\begin{tabular}{c}$\widetilde{\mathfrak{E}}(\bm A/\tr(\bm A))$\end{tabular}}%
%
% xticklabels:
\psfrag{000}[ct][ct]{$0$}%
\psfrag{001}[ct][ct]{$5$}%
\psfrag{002}[ct][ct]{$10$}%
\psfrag{003}[ct][ct]{$15$}%
\psfrag{004}[ct][ct]{$20$}%
%
% yticklabels:
\psfrag{005}[rc][rc]{$0$}%
\psfrag{006}[rc][rc]{$10$}%
\psfrag{007}[rc][rc]{$20$}%
\psfrag{008}[rc][rc]{$30$}%
\psfrag{009}[rc][rc]{$40$}%
\psfrag{010}[rc][rc]{$50$}%

%
% Figure:
\begin{figure}
\centering
\includegraphics[width=0.63\linewidth]{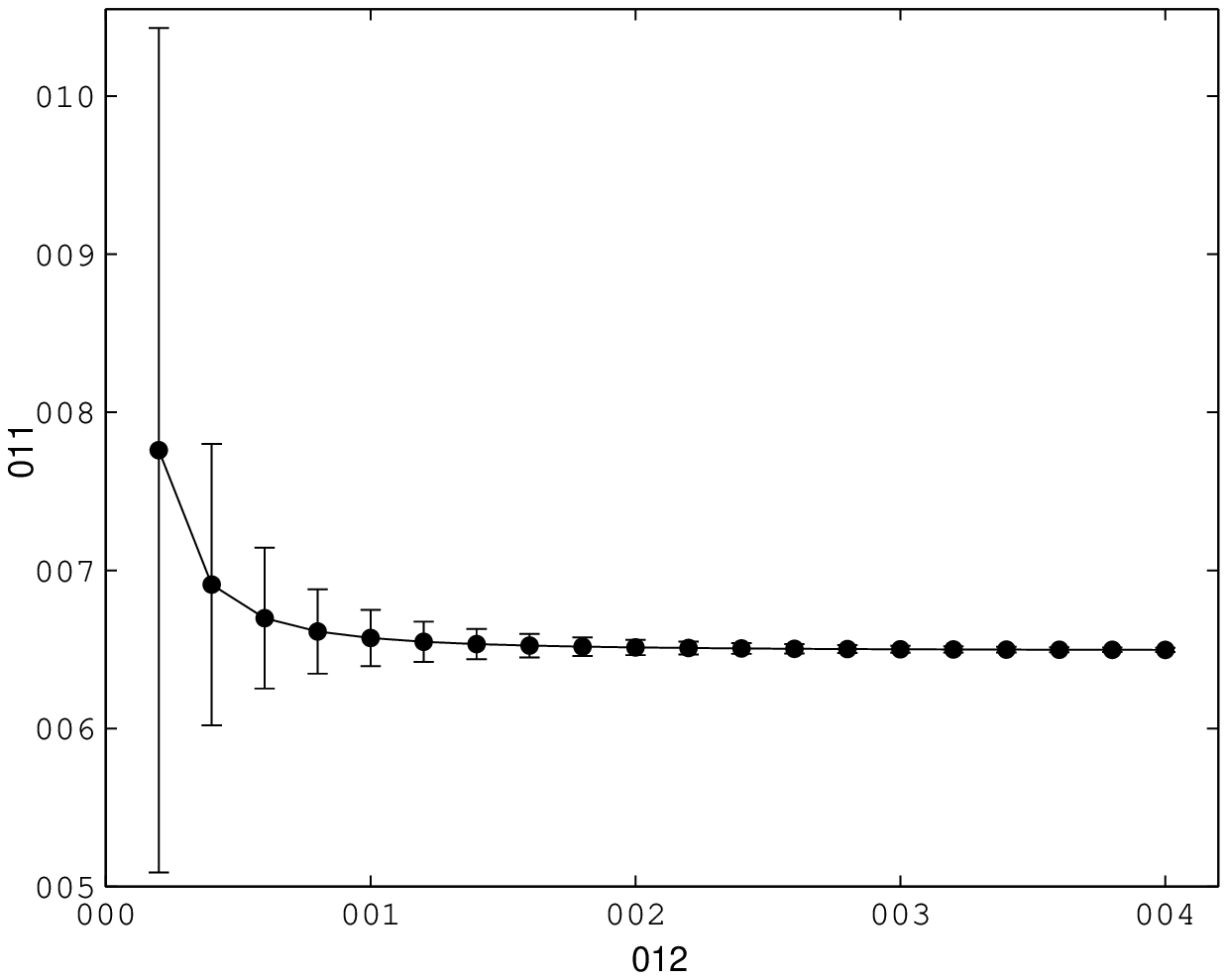}%
\caption{Quantum optics application, for matrix size~$m=0.72\cdot 10^8$: Approximate entropy $\widetilde{\E}(\bm A/\tr(\bm A))$ based on Algorithm 4.5 with~$p=0.95$ for various polynomial degrees $n$; the vertical bars indicate the computational error ranges according to~\eqref{eq:tau}.}
\label{fig:entropy}
\end{figure}
\end{psfrags}%

\section{Conclusions}\label{sc:conclusions}

In this article, we have derived a new algorithm for the computation of the entropy of a large real symmetric, positive semidefinite matrix. The proposed procedure does neither require the computation of the spectrum nor of the matrix logarithm. Indeed, it is based on the following two main ideas:
\begin{itemize}
\item Approximation of the `entropy function'~$\L$ by a reasonably accurate Chebyshev polynomial.
\item Computation of the entropy by combining a Monte-Carlo type sampling procedure and a Clenshaw algorithm for matrix polynomials.
\end{itemize}
The new algorithm is parallelizable and straightforward to implement. It was tested for a classical finite element matrix as well as for a large discretization matrix originating from a quantum optics application. In both cases, our algorithm is able to achieve accurate results in a very efficient way.

An interesting extension of this research constitutes the computation of the matrix entropy in the context of {\em complex} discrete Hermitean operators. Here, an important ingredient is the appropriate redefinition of the function~$\L$ for complex input values and the corresponding approximation by polynomial functions for both the real as well as the imaginary part.

The algorithm in this work can be extended to evaluate the relative entropy  $\E(\sigma||\rho)=\tr(\sigma\log(\sigma))-\tr(\sigma\log(\rho))$ between two density matrices $\sigma$ and $\rho$. This quantity determines the relative entropy of entanglement $E(\sigma)=\min_{\rho\in D}\E(\sigma||\rho)$, where $D$ is the set of all disentangled states. A numerical algorithm for the computation of the relative entropy of entanglement was proposed for low dimensions \cite{Zinchenko2010}, however, no algorithm is yet available for large systems.

\section*{Acknowledgements}
BB and AS were supported by the grant PP00P2\_133596 funded by the Swiss National Science Foundation. TW acknowledges funding by the Swiss National Science Foundation under grant~200020\_144442.

% bibliography
\bibliographystyle{unsrt} 
%\bibliography{literature}

\begin{thebibliography}{10}

\bibitem{Shannon1948}
C.~E. Shannon.
\newblock {A mathematical theory of communication}.
\newblock {\em The Bell System Technical Journal}, 27(July 1928):379--423,
  1948.

\bibitem{Bekenstein1973}
J.~D. Bekenstein.
\newblock {Black holes and entropy}.
\newblock {\em Phys. Rev. D}, 7(8):2333, 1973.

\bibitem{Wehrl1978}
A.~Wehrl.
\newblock {General properties of entropy}.
\newblock {\em Rev. Mod. Phys.}, 50(2):221--260, 1978.

\bibitem{neumann1927}
J.~Von~Neumann.
\newblock Wahrscheinlichkeitstheoretischer {A}ufbau der {Q}uantenmechanik.
\newblock {\em G\"ottinger Nachrichten}, 1:245--272, 1927.

\bibitem{BarnettSM1989}
S.~M. Barnett and S.~J.~D. Phoenix.
\newblock {Entropy as a measure of quantum optical correlation}.
\newblock {\em Phys. Rev. A}, 40(5):2404--2409, 1989.

\bibitem{BarnettSM1991}
S.~M. Barnett and S.~J.~D. Phoenix.
\newblock {Information theory, squeezing, and quantum correlations}.
\newblock {\em Phys. Rev. A}, 44(1):535--545, 1991.

\bibitem{vedral1997}
V.~Vedral, M.~B. Plenio, M.~A. Rippin, and P.~L. Knight.
\newblock {Quantifying Entanglement}.
\newblock {\em Phys. Rev. Lett.}, 78(12), 1997.

\bibitem{Greenberger2009}
D.~Janzing, .
\newblock {\em {Compendium of Quantum Physics} ed D.~Greenberger, K.~Hentschel, and F.~Weinert}, pages 205--209.
\newblock Springer Berlin Heidelberg, Berlin, Heidelberg, 2009.

\bibitem{Vedral2002}
V.~Vedral.
\newblock {The role of relative entropy in quantum information theory}.
\newblock {\em Rev. Mod. Phys.}, 74(1):197--234, 2002.

\bibitem{HighamBook08}
N.~J. Higham.
\newblock {\em Functions of matrices}.
\newblock Society for Industrial and Applied Mathematics (SIAM), Philadelphia,
  PA, 2008.
\newblock Theory and computation.

\bibitem{BaFaGo96}
Z.~Bai, M.~Fahey, and G.~Golub.
\newblock Some large-scale matrix computation problems.
\newblock {\em J. Comput. Appl. Math.}, 74(1-2):71--89, 1996.
\newblock TICAM Symposium (Austin, TX, 1995).

\bibitem{DongLiu94}
S.~Dong and K.~Liu.
\newblock Stochastic estimation with ${Z}_2$ noise.
\newblock {\em Phys. Lett. B}, 328(1--2):130--136, 1994.

\bibitem{Hutchinson89}
M.~F. Hutchinson.
\newblock A stochastic estimator of the trace of the influence matrix for
  {L}aplacian smoothing splines.
\newblock {\em Comm. Statist. Simul. Comput.}, 18(3):1059--1076, 1989.

\bibitem{FoPa68}
L.~Fox and I.~B. Parker.
\newblock {\em Chebyshev polynomials in numerical analysis}.
\newblock Oxford University Press, London, 1968.

\bibitem{Tr08}
L.~N. Trefethen.
\newblock Is {G}auss quadrature better than {C}lenshaw-{C}urtis?
\newblock {\em SIAM Rev.}, 50(1):67--87, 2008.

\bibitem{Cl55}
C.~W. Clenshaw.
\newblock A note on the summation of {C}hebyshev series.
\newblock {\em Math. Tables Aids Comput.}, 9:118--120, 1955.

\bibitem{weisse2006}
A.~Weisse, G.~Wellein, A.~Alvermann, and H.~Fehske.
\newblock{The kernel polynomial method}.
\newblock{\em Rev. Mod. Phys.}, 78(1):275--306, 2006.

\bibitem{Hoeffding63}
W.~Hoeffding.
\newblock Probability inequalities for sums of bounded random variables.
\newblock {\em J. Amer. Statist. Assoc.}, 58:13--30, 1963.

\bibitem{Gerschgorin31}
S.~Gerschgorin.
\newblock \"{U}ber die Abgrenzung der Eigenwerte einer Matrix.
\newblock {\em Izv. Akad. Nauk. USSR Otd. Fiz.-Mat. Nauk}, 6:749-754, 1931.

\bibitem{saad2011}
Y.~Saad. 
\newblock {\em Numerical Methods for Large Eigenvalue Problems.}
\newblock Classics in Applied Mathematics, SIAM, Philadelphia, 2011. 

\bibitem{Jo09}
C.~Johnson.
\newblock {\em Numerical solution of partial differential equations by the
  finite element method}.
\newblock Dover Publications Inc., Mineola, NY, 2009.
\newblock Reprint of the 1987 edition.

\bibitem{RevModPhys.71.S288}
A.~Zeilinger.
\newblock Experiment and the foundations of quantum physics.
\newblock {\em Rev. Mod. Phys.}, 71:S288--S297, 1999.

\bibitem{Genovese2005}
M.~Genovese.
\newblock {Research on hidden variable theories: A review of recent
  progresses}.
\newblock {\em Phys. Rep.}, 413(6):319--396, 2005.

\bibitem{RevModPhys.79.135}
P.~Kok, W.~J. Munro, K.~Nemoto, T.~C. Ralph, J.~P. Dowling, and G.~J. Milburn.
\newblock Linear optical quantum computing with photonic qubits.
\newblock {\em Rev. Mod. Phys.}, 79:135--174, 2007.

\bibitem{Gisin2007}
N.~Gisin and R.~Thew.
\newblock {Quantum communication}.
\newblock {\em Nat. Phot.}, 1:165--171, 2007.

\bibitem{keller_rubin97pra}
T.~E. Keller and M.~H. Rubin.
\newblock Theory of two-photon entanglement for spontaneous parametric
  down-conversion driven by a narrow pump pulse.
\newblock {\em Phys. Rev. A}, 56(2):1534--1541, 1997.

\bibitem{bennet_etalpra96}
C.~H. Bennett, H.~J. Bernstein, S.~Popescu, and B.~Schumacher.
\newblock Concentrating partial entanglement by local operations.
\newblock {\em Phys. Rev. A}, 53:2046--2052, 1996.

\bibitem{law_etal00prl}
C.~K. Law, I.~A. Walmsley, and J.~H. Eberly.
\newblock Continuous frequency entanglement: Effective finite hilbert space and
  entropy control.
\newblock {\em Phys. Rev. Lett.}, 84(23):5304--5307, 2000.

\bibitem{Zinchenko2010}
Y.~Zinchenko, S.~Friedland, and G.~Gour.
\newblock {Numerical estimation of the relative entropy of entanglement}.
\newblock {\em Phys. Rev. A}, 82(5):052336, 2010.

\end{thebibliography}

\end{document}